\numberwithin{equation}{section}
\theoremstyle{plain}
\newtheorem{theorem}{Theorem}[section]
\newtheorem{lemma}[theorem]{Lemma}
\newcommand{\rational}{\ensuremath{\mathbb{Q}}} 
\newcommand{\field}{\ensuremath{\mathbb{F}}}    
\newcommand{\real}{\ensuremath{\mathbb{R}}}     
\newcommand{\cplex}{\ensuremath{\mathbb{C}}}    
\newcommand{\integer}{\ensuremath{\mathbb{Z}}}  
\newcommand{\nature}{\ensuremath{\mathbb{N}}}  
\newcommand{\wout}{\setminus}
\newcommand{\exc}{\mathcal{E}}
\newcommand{\ssum}[2] {\overset{#2}{\underset{#1}{\sum}}}  
\begin{document}

\newtheorem*{theorem*}{Theorem}
\newtheorem*{lemma*}{Lemma}
\newtheorem*{claim*}{Claim}
\newtheorem*{exercise*}{Exercise}
\newtheorem*{note*}{Note}
\newtheorem*{example*}{Example}
\newtheorem*{problem*}{Problem}
\newtheorem*{solution*}{Solution}
\newtheorem*{remark*}{Remark}

\newtheorem{corollary}[theorem]{Corollary}
\newtheorem{example}[theorem]{Example}
\newtheorem{conjecture}[theorem]{Conjecture}
\newtheorem{definition}[theorem]{Definition}
\newtheorem{proposition}[theorem]{Proposition}
\newtheorem{remark}[theorem]{Remark}
\newcommand{\Z}{\integer}
\newcommand{\zpinfty}{\integer (p^\infty)}
\newcommand{\x}{\text{\boldmath{$X$}}}
\newcommand{\h}{\mathcal{H}}
\newcommand{\HH}{\text{\boldmath{$H$}}}
\newcommand{\g}{\text{\boldmath{$g$}}}
\newcommand{\G}{\text{\boldmath{$G$}}}
\newcommand{\F}{\text{\boldmath{$F$}}}
\newcommand{\w}{\text{\boldmath{$w$}}}
\newcommand{\re}{\text{Re}}
\newcommand{\vv}{\text{\boldmath{$v$}}}
\newcommand{\order}{\mathcal{O}}
\newcommand{\End}{\text{End}}
\newcommand{\iso}{\cong}                        

\title{ CM liftings of Supersingular Elliptic Curves}
\address{Department of Mathematics\\ Radboud Universiteit Nijmegen\\
Heijendaalseweg 135, 6525 AJ Nijmegen, Netherlands
}
\author{Ben Kane}
\email{bkane@science.ru.nl}
\thanks{This research was conducted while the author was a student at the University of Wisconsin-Madison.}
\date{\today}
\subjclass[2000]{11G05, 11E20, 11E45, 11Y35, 11Y70}
\begin{abstract}
Assuming GRH, we present an algorithm which inputs a prime $p$ and outputs the set of fundamental discriminants $D<0$ such that the reduction map modulo a prime above $p$ from elliptic curves with CM by $\order_{D}$ to supersingular elliptic curves in characteristic $p$.  In the algorithm we first determine an explicit constant $D_p$ so that $|D|> D_p$ implies that the map is necessarily surjective and then we compute explicitly the cases $|D|<D_p$.  

Supposant vraie la conjecture de Riemann g\'{e}n\'{e}ralis\'{e}e nous pr\'{e}sentons un algorithme qui, donn\'{e} un nombre pr\'{e}mier p, calcule l'ensemble des discriminants fondamentaux $D<0$ tels que l'application de reduction modulo un premier aux dessus $p$ des courbes elliptiques avec multiplication complexe par $\order_{D}$ vers les courbes elliptiques supersingular en characteristique $p$ est surjective.  Dans l'algorithme, nous d'abord determinons une borne $D_p$ explicite, tel que $|D|> D_p$ implique que l'application est necessairement surjective et puis nous calculons explicitement les cas $|D|<D_p$.

\end{abstract}
\keywords{Quaternion Algebra, Elliptic Curves, Maximal Orders, Half Integer Weight Modular Forms, Kohnen's Plus Space, Shimura Lifts}
\maketitle

\section{Introduction}\label{introsection}
For $D<0$ a fundamental discriminant, consider the imaginary quadratic field $K:=\rational(\sqrt{D})$ with ring of integers $\order_{D}$ and Hilbert class field $H_K$.  From the work of Deuring \cite{Deuring1}, given a prime $p$ which does not split in $\order_{D}$ (i.e. $(D/p)\neq 1$) and an elliptic curve $E/H_K$ with CM by $\order_{D}$ (i.e. $\End_{\overline{K}}(E)\iso \order_D$) the reduction to characteristic $p$ gives a supersingular elliptic curve defined over $\field_{p^2}$.  Using an equidistribution result of Duke and Schulze-Pillot \cite{DukePillot1}, based upon bounds for coefficients of half-integral weight cusp forms by Iwaniec \cite{Iwaniec1} and Duke \cite{Duke1}, combined with an (ineffective) lower bound for the class number $h(D)$ of $\order_{D}$ due to Siegel \cite{Siegel1}, Elkies, Ono and Yang \cite{ElkiesOnoYang1} deduce that the reduction map is surjective for $|D|$ sufficiently large.   Denote the (finite) set of such $D$ for which the reduction map is not surjective by $\exc_p$ and define $\exc_p':= \{|D|: D\in \exc_p\}$.  We will say that $D_p\in \nature$ is a \begin{it}good bound for $p$\end{it} if $\max \exc_p' < D_p$ (suppressing $p$ when the context is clear).

Although $\exc_p$ is finite, no explicit good bound is given above due to the ineffective nature of Siegel's lower bound.  In this paper we will present an algorithm which will input a prime $p$ and return the set $\exc_p$.  This algorithm is conditional upon the Generalized Riemann Hypothesis for Dirichlet L-functions and also the Generalized Riemann Hypothesis for the L-series of weight 2 primitive newforms (henceforth simply denoted GRH).  In particular, the algorithm will terminate unconditionally, but the correctness is dependent on GRH.  The assumption of GRH will allow us to use techniques developed by Ono and Soundarajan \cite{OnoSound1} to explicitly compute a good bound for $p$.
\begin{theorem}\label{BoundTheorem}
Let a prime $p$ be given.  Conditional upon GRH, there is an effectively computable good bound for $p$.
\end{theorem}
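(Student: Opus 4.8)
The plan is to make effective every ineffective ingredient in the Elkies--Ono--Yang surjectivity argument, and I begin by reformulating surjectivity as a positivity statement. By Deuring's theory the reduction map is surjective for a given $p$ precisely when, for each supersingular curve $E_i$ in characteristic $p$ (with $i=1,\dots,n_p$ and weight $w_i=|\operatorname{Aut}(E_i)|/2\le 3$), the weighted number $a_i(D)$ of elliptic curves with CM by $\order_D$ reducing to $E_i$ is strictly positive. Following Gross and Eichler, $a_i(D)$ counts optimal embeddings of $\order_D$ into the maximal order $\End(E_i)$ in the quaternion algebra ramified at $p$ and $\infty$, and is therefore governed by the number of representations $r_{Q_i}(|D|)$ of $|D|$ by the ternary quadratic form $Q_i$ given by the reduced norm on the trace-zero part of $\End(E_i)$. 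Thus it suffices to show that $r_{Q_i}(|D|)>0$ for every $i$ once $|D|$ exceeds an explicit bound.

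Next I would invoke the structure of the associated generating functions. Each theta series $\sum_{m\ge 0} r_{Q_i}(m)\,q^m$ lies in a space of weight $3/2$ modular forms (in Kohnen's plus space, after the standard adjustments), which decomposes as an Eisenstein part plus a cusp form; writing $r_{Q_i}(|D|)=M_i(D)+t_i(D)$ splits the count into a main term and an error term. The Eisenstein (genus) contribution $M_i(D)$ is a positive multiple of $\sqrt{|D|}\,L(1,\chi_D)$ times explicit local densities at $p$, so that $M_i(D)\gg_p h(D)/w_i$, while the cuspidal contribution $t_i(D)$ is controlled by the Fourier coefficients of weight $3/2$ cusp forms of level dividing $4p$.

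The effectivization is where the two GRH hypotheses enter and where the real work lies. For the main term I would use GRH for Dirichlet $L$-functions to obtain, via the explicit estimates of Ono and Soundararajan, a fully effective lower bound $L(1,\chi_D)\gg 1/\log|D|$, hence $M_i(D)\gg_p \sqrt{|D|}/\log|D|$ with a constant depending explicitly on $p$. For the error term I would pass through the Shimura correspondence: the relevant weight $3/2$ cusp forms lift to weight $2$ primitive newforms $g$ of level $p$, and the Waldspurger--Kohnen--Zagier formula expresses $|t_i(D)|^2$ in terms of the central values $L(1/2,g\otimes\chi_D)$. Invoking GRH for these weight $2$ $L$-functions yields the Lindel\"of-quality bound $L(1/2,g\otimes\chi_D)\ll_{p,\varepsilon}|D|^{\varepsilon}$, and therefore $|t_i(D)|\ll_{p,\varepsilon}|D|^{1/4+\varepsilon}$, again with an explicitly computable constant.

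Finally I would compare the two estimates: since the main term grows like $|D|^{1/2}/\log|D|$ while the error is $O(|D|^{1/4+\varepsilon})$, solving the explicit inequality $M_i(D)>|t_i(D)|$ simultaneously for all $i$ produces an effective threshold $D_p$ with $r_{Q_i}(|D|)>0$ for every $i$ whenever $|D|\ge D_p$; then every non-surjective discriminant satisfies $|D|<D_p$, so $\max\exc_p'<D_p$ and $D_p$ is a good bound. The main obstacle is the bookkeeping made delicate by the dependence on $p$: I must track precisely how the constants in the genus local densities, in the Waldspurger formula, and in the explicit GRH bound on $L(1/2,g\otimes\chi_D)$ depend on $p$ (equivalently on the level and the number of newforms $g$), so that the comparison, and hence $D_p$, is genuinely computable rather than merely existent. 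Once $D_p$ is in hand, the finitely many remaining discriminants with $|D|<D_p$ are checked directly, which is exactly the second phase of the algorithm.
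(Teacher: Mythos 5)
Your proposal follows essentially the same route as the paper: reduce surjectivity to positivity of coefficients of the Gross-lattice theta series $\theta_E$, decompose $\theta_E$ into the Eisenstein series $H_{\theta}$ plus weight $3/2$ Hecke eigenforms in Kohnen's plus space, Shimura-lift to weight $2$ newforms of level $p$, use the Kohnen--Zagier/Waldspurger formula to convert cuspidal coefficients into central twisted $L$-values, and invoke GRH to make the comparison of the main term $\gg_p |D|^{1/2}L(1,\chi_D)$ against the cuspidal term $\ll |D|^{1/4+\varepsilon}$ effective. The one (minor) divergence is that you bound $L(G_i,D,1)$ from above and $L(1,\chi_D)$ from below separately, whereas the paper, following Ono--Soundararajan as generalized in \cite{Kane2}, bounds the combined ratio $F(1)=L_i(1)/L(1)^2$ in a single explicit-formula argument exploiting cancellation between the two Dirichlet series, which does not change the logical structure but is what makes the resulting numerical constants small enough for the computations reported in Appendix \ref{datasection}.
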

Explicitly computing the bound given by Theorem \ref{BoundTheorem} for $p\leq 107$ we obtain the following.
\begin{theorem}\label{Dps}
Assuming GRH, $3.257\times 10^{25}$ is a good bound for $p\leq 107$.  Moreover, Table \ref{GoodBoundTable} in appendix \ref{datasection} contains good bounds for each $p\leq 107$.
\end{theorem}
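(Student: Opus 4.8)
The plan is to realize Theorem~\ref{Dps} as the explicit numerical instance of the effective good bound produced in the proof of Theorem~\ref{BoundTheorem}. That is, I would first extract from Theorem~\ref{BoundTheorem} the closed-form expression for the good bound $D_p$ as a function of $p$, and then evaluate it at each prime $p \leq 107$, verifying that the largest resulting value does not exceed $3.257 \times 10^{25}$. Since the existence and effectivity of such a $D_p$ is exactly the content of Theorem~\ref{BoundTheorem}, the work here is entirely computational.

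To set up the surjectivity criterion that defines $D_p$, I would encode the reduction map via the representation numbers attached to the supersingular points: by Deuring and Eichler, the weighted number of CM points by $\order_D$ reducing to a fixed supersingular curve $e_i$ (with weight $w_i=|\operatorname{Aut}(e_i)|/2$) is a Fourier coefficient of a weight-$3/2$ theta series, and the reduction map is surjective precisely when every such representation number is positive. Splitting this coefficient into its Eisenstein (main) part, which is proportional to $h(D)/w_i$, and its cuspidal part, the surjectivity condition reads
\begin{equation*}
\frac{c_i\, h(D)}{w_i} > \bigl| a_i(|D|) \bigr|,
\end{equation*}
where $a_i(|D|)$ is the relevant cusp-form coefficient. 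The plan is then to bound the two sides effectively under GRH: a lower bound $h(D) \gg \sqrt{|D|}/\log\log|D|$ with explicit constant coming from GRH for $L(s,\chi_D)$ (replacing Siegel's ineffective bound), and an upper bound $|a_i(|D|)| \ll |D|^{1/4}(\log|D|)^{\ast}$ with explicit constant obtained by passing through the Shimura lift to weight-$2$ newforms of level dividing a power of $p$ and applying the conditional estimates of Ono--Soundararajan~\cite{OnoSound1}. Combining these yields an inequality of the shape $|D|^{1/4} > F(p)\,(\log|D|)^{\ast}\log\log|D|$, whose least admissible solution $|D|$ is, by definition, the good bound $D_p$.

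With this explicit $D_p$ in hand, I would carry out the computation for each of the primes $p \leq 107$: solve the resulting transcendental inequality for the threshold, record $D_p$ in Table~\ref{GoodBoundTable}, and take the maximum over all such $p$. Since $F(p)$ grows like a fixed power of $p$ (times logarithmic factors arising from the level and from the number of supersingular points, which depends only on $p \bmod 12$), I expect the largest $D_p$ to occur near $p=107$ and to be bounded by $3.257\times 10^{25}$; it then follows at once that this single value is simultaneously a good bound for every $p\leq 107$.

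I expect the main obstacle to be the rigorous bookkeeping of constants rather than any conceptual difficulty. One must track the implied constants through the Shimura correspondence, the Kohnen plus-space isomorphism, and the Ono--Soundararajan estimate without losing effectivity, and then solve the transcendental surjectivity inequality rigorously---for instance by interval arithmetic or by a safe monotone over-estimate of the $\log$ and $\log\log$ terms---so that each reported value of $D_p$ is a genuine upper bound for $\max \exc_p'$ and not merely an approximate root.
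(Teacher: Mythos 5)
Your overall framing---that Theorem~\ref{Dps} is the numerical instantiation of Theorem~\ref{BoundTheorem} and that the remaining work is computational---matches the paper, but the analytic core of your plan diverges from it in a way that creates a genuine gap for this particular statement. You propose to bound the two sides of the surjectivity criterion \emph{separately}: a GRH lower bound $h(D)\gg\sqrt{|D|}/\log\log|D|$ for the Eisenstein part, and a GRH upper bound on the cusp coefficient via the central value $L(G_i,D,1)$. The paper instead works with the single ratio $F(1)=L_i(1)/L(1)^2$ appearing in equation~(\ref{setupeqn}), bounding $F(s)=\bigl(\sqrt{q}/2\pi\bigr)^{s-1}L_i(s)\Gamma(s)/\bigl(L(s)L(2-s)\bigr)$ at $\sigma=1.15$ via the Ono--Soundararajan explicit-formula machinery as generalized in \cite{Kane2}; the entire point of this formulation is the cancellation between the prime-power sums coming from $L_i'/L_i$ and the term $-2\log|L(s)|$ (see equations~(\ref{boundLminus}) and~(\ref{compbound})), which is what keeps the resulting thresholds in the $10^{8}$--$10^{25}$ range. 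Bounding numerator and denominator separately forfeits this cancellation: you would still obtain \emph{some} effective bound under GRH, but almost certainly a vastly larger one, and since the theorem asserts the specific value $3.257\times 10^{25}$ and the specific entries of Table~\ref{GoodBoundTable}, producing a larger bound does not prove the statement.

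A second concrete problem is that there is no closed-form $D_p$ as a function of $p$ to ``extract'' from Theorem~\ref{BoundTheorem}, and your heuristic that the maximum should occur near $p=107$ because the constant grows like a power of $p$ is contradicted by the data. The paper computes a bound $D_E$ separately for \emph{each} supersingular curve $E/\field_{p^2}$, which requires explicitly decomposing the relevant subspace of $S_{3/2}^{+}(4p)$ into Hecke eigenforms, constructing a Shimura lift by the recursive choice of $t_\ell$ and $r_\ell$, and numerically evaluating $L(G_i,m_i,1)$ by Cremona's formula to obtain the constants $b_i$ and $c_i$; then $D_p=\max_E D_E$, with separate thresholds $D_0$ and $D_1$ according as $(D,p)=1$ or $p\mid D$ (note the factor $2^{-v_p(|D|)/2}$ in~(\ref{setupeqn})). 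These constants are arithmetic in nature and not monotone in $p$: the maximum in Table~\ref{GoodBoundTable} occurs at $p=89$, not near $p=107$, and for instance $D_{19}<D_{17}$. So an inequality of the shape $|D|^{1/4}>F(p)(\log|D|)^{\ast}\log\log|D|$ with a single monotone $F(p)$ cannot reproduce the table; the actual proof is the execution of the per-curve algorithm of Section~\ref{calcsection} with the parameters $\x=455$, $\sigma=1.15$, $N_0=1000$, $\sigma_2=1.3256$. Your closing remarks about rigorous constant-tracking and safe over-estimates are well taken, but they would need to be applied within the ratio framework, per curve, to land on the stated numbers.
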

After obtaining the good bound $D_p$ from Theorem \ref{BoundTheorem}, it only remains to explicitly compute the set of $|D|\leq D_p$ for which the mapping is not surjective.  For each supersingular elliptic curve $E/\field_{p^2}$ we will construct a positive definite (ternary) quadratic form $Q_E$ such that $Q_E$ represents $|D|$ if and only if there exists $E'$ with CM by $\order_{D}$ which reduces to $Q_E$.  Since there are only finitely many supersingular elliptic curves we then merely need to check which $|D|\leq D_p$ are represented by each $Q_E$.  

One may then use the algorithm by Fincke and Pohst \cite{FinckePohst1} to determine a vector of length $|D|$.  The usual implementation returns all vectors so running this algorithm for each $|D|\leq D_p$ to determine $\exc_p$ is $\Omega\left(D_p^{3/2-\epsilon}\right)$ and the calculation quickly becomes infeasible for moderately large $D_p$.  We hence want to take advantage of the fact that we do not need all representations of $|D|$ but rather only one.  In the case where $E$ is defined over $\field_p$ we are able to use a classification result of Ibukiyama \cite{Ibukiyama1} to develop a specialized algorithm which determines more efficiently the set of $|D|<D_p$ which are represented (see Section \ref{smallDsection}).  This algorithm has allowed us to compute the full set $\exc_p$ for $p=11,17,$ and $19$. 
\begin{theorem}\label{explicitthm}
Assuming GRH, the following hold.
\begin{enumerate}
\item \label{peq11} The set $\exc_{11}$ is given by 
\begin{eqnarray*}
\exc_{11}'&=& \left\{  3, 4, 11, 67,88, 91, 163, 187, 232, 235, 427 499, 595,627,\right.\\
&&\left.  715, 907,1387, 1411, 3003, 3355, 4411, 5107,6787, 10483, 11803 \right\}.
\end{eqnarray*}
\item \label{peq17}The set $\exc_{17}$ satisfies $\#\exc_{17}=91$ and $\max \exc_{17}' = 89563$.
\item \label{peq19}The set $\exc_{19}$ satisfies $\#\exc_{19}=45$ and $\max \exc_{19}' = 27955$.
\end{enumerate}
\end{theorem}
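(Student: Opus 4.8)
The plan is to combine the effective good bounds from Theorems \ref{BoundTheorem} and \ref{Dps} with an explicit representation computation. By Theorem \ref{Dps} and Table \ref{GoodBoundTable}, for each $p \in \{11,17,19\}$ I have an explicit $D_p$ with $\max \exc_p' < D_p$, so every fundamental discriminant $D<0$ with $(D/p)\neq 1$ and $|D|>D_p$ automatically lies outside $\exc_p$ (the map is already surjective there). It therefore suffices to decide, for the finitely many fundamental discriminants $|D|\leq D_p$ with $(D/p)\neq 1$, whether or not $D\in\exc_p$, and then to assemble $\exc_p$ from these finitely many decisions.

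For the deciding step I would use the quadratic-form reformulation promised in the introduction. To each supersingular curve $E/\field_{p^2}$ one attaches the positive-definite ternary form $Q_E$ (the relevant norm form on $\End(E)$), with the property that $Q_E$ represents $|D|$ if and only if some curve with CM by $\order_D$ reduces to $E$. Consequently the reduction map is surjective for a given $D$ precisely when \emph{every} $Q_E$ represents $|D|$, and hence $D\in\exc_p$ if and only if at least one of the forms $Q_E$ fails to represent $|D|$. Since the supersingular $j$-invariants in characteristic $p$ are finite in number (and few for the small primes at hand), this converts the surjectivity question into a finite collection of representability questions indexed by $|D|\leq D_p$.

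The hard part is making this feasible for $D_p$ as large as in Theorem \ref{Dps}. Naively enumerating all representations of each target $|D|$ via the Fincke--Pohst algorithm costs $\Omega\!\left(D_p^{3/2-\epsilon}\right)$, which is hopeless at this scale. The decisive observation is that for $p=11,17,19$ the supersingular curves are defined over $\field_p$ rather than only over $\field_{p^2}$, so Ibukiyama's classification of the associated maximal orders applies. As developed in Section \ref{smallDsection}, this furnishes a specialized algorithm that tests representability of a single value $|D|$ by the forms $Q_E$ without listing all representing vectors, and thus decides membership in $\exc_p$ efficiently. Running this algorithm over all fundamental $|D|\leq D_p$ with $(D/p)\neq 1$ yields the complete sets $\exc_{11}$, $\exc_{17}$, and $\exc_{19}$; the explicit list in part \eqref{peq11} and the cardinalities and maxima in parts \eqref{peq17} and \eqref{peq19} are then read directly off the computational output, which is exactly what the theorem asserts. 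The conditionality on GRH enters only through the good bound $D_p$ supplied by Theorem \ref{Dps}; once $D_p$ is fixed, the remaining computation is unconditional.
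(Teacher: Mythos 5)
Your proposal is correct and follows essentially the same route as the paper: GRH enters only through the good bound $D_p$ of Theorems \ref{BoundTheorem} and \ref{Dps}, membership in $\exc_p$ is reduced via the Gross lattice forms $Q_E$ to finitely many representability tests, and the key point that for $p=11,17,19$ every supersingular curve is defined over $\field_p$ (so the Ibukiyama-based algorithm of Section \ref{smallDsection} applies to every $Q_E$) is exactly how the paper makes the computation feasible. The only detail you omit is the paper's further optimization of checking each form only up to its own bound $D_E$ (with separate bounds $D_0$, $D_1$ for $(D,p)=1$ and $p\mid D$, and skipping $p^2\mid |D|$ since $p$ is anisotropic), but this affects efficiency, not correctness.
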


Having established such surjectivity results, one may ask whether similar results can be shown about the multiplicity of the reduction map.  This question was addressed and an unconditional but ineffective solution was given by Elkies, Ono, and Yang \cite{ElkiesOnoYang1}.  

Define the Hilbert class polynomial $\h_D(x)\in\integer[x]$ as the unique monic polynomial whose roots are precisely the $j$-invariants of the elliptic curves with complex multiplication by $\order_{D}$.  These roots are referred to as \begin{it}singular moduli of discriminant\end{it} $D$.  The degree of the Hilbert class polynomial is $h(D)$.  Define further $S_p(x)\in \field_p[x]$ to be the polynomial with roots precisely the $j$-invariants of the supersingular elliptic curves of characteristic $p$.
\begin{theorem*}[Elkies-Ono-Yang \cite{ElkiesOnoYang1}]
For a prime $p$ and $t\in\nature$, every sufficiently large fundamental discriminant $D<0$ for which $p$ does not split in $\order_{D}$ satisfies
$$
S_p(x)^t\mid \h_D(x)
$$
over $\field_p[x]$.  
\end{theorem*}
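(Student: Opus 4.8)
The plan is to translate the divisibility $S_p(x)^t \mid \h_D(x)$ into a statement about how many singular moduli of discriminant $D$ collapse onto each supersingular $j$-invariant modulo $p$, and then to show by equidistribution that this number grows without bound. Fix a prime $\mathfrak{p}$ of $\overline{\rational}$ above $p$. Since $\h_D$ is monic, $\deg(\h_D \bmod p) = h(D)$, and because $p$ does not split in $\order_D$, Deuring's theorem \cite{Deuring1} guarantees that each of the $h(D)$ singular moduli reduces modulo $\mathfrak{p}$ to a supersingular $j$-invariant in $\field_{p^2}$. Enumerate the supersingular curves as $E_1,\dots,E_h$ with $j$-invariants $j_1,\dots,j_h$, so that $S_p(x)=\prod_i (x-j_i)$. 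The multiplicity of $(x-j_i)$ in $\h_D \bmod p$ is exactly the number $N_i(D)$ of discriminant-$D$ singular moduli reducing to $j_i$, and $\sum_i N_i(D)=h(D)$. Hence $S_p(x)^t \mid \h_D(x)$ over $\field_p[x]$ is equivalent to the assertion that $N_i(D)\ge t$ for every $i$, and it suffices to prove that $N_i(D)\to\infty$ as $|D|\to\infty$ for each fixed supersingular curve $E_i$.

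Next I would realize $N_i(D)$ as a Fourier coefficient. By the Deuring correspondence together with Eichler's theory of optimal embeddings, the (suitably weighted) reduction numbers are governed, up to the bounded unit factors $|\End(E_i)^\times|\in\{2,4,6\}$, by the representation numbers of the positive definite ternary quadratic forms $Q_{E_i}$ attached to the trace-zero endomorphisms of $E_i$ (the forms $Q_E$ introduced above). Assembling these theta series produces a modular form of weight $3/2$ lying in Kohnen's plus space, whose $|D|$-th coefficient records $N_i(D)$. I would then decompose this form as $\theta_{Q_{E_i}}=\mathcal{G}_i+\mathcal{C}_i$ into an Eisenstein part $\mathcal{G}_i$ and a cusp form $\mathcal{C}_i$.

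The equidistribution input of Duke and Schulze-Pillot \cite{DukePillot1} now controls the two pieces separately. The $|D|$-th coefficient of the Eisenstein part $\mathcal{G}_i$ is the genus (average) representation number, which by the mass formula and the Dirichlet class number formula is a positive main term of size $\asymp_p h(D)$; meanwhile the cuspidal contribution is bounded, via the half-integral weight estimates of Iwaniec \cite{Iwaniec1} and Duke \cite{Duke1}, by $|c_i(|D|)|\ll_p |D|^{1/2-\delta}$ for some absolute $\delta>0$. Combining these with Siegel's lower bound \cite{Siegel1}, $h(D)\gg_\epsilon |D|^{1/2-\epsilon}$, and choosing $\epsilon<\delta$, the main term dominates the error and yields $N_i(D)\gg_{\epsilon,p}|D|^{1/2-\epsilon}\to\infty$.

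Finally, since there are only finitely many supersingular curves $E_1,\dots,E_h$ in characteristic $p$, for each fixed $t$ I would take the maximum over $i$ of the threshold past which $N_i(D)\ge t$; beyond this threshold every supersingular factor occurs with multiplicity at least $t$, giving $S_p(x)^t \mid \h_D(x)$. I expect the main obstacle to be the clean identification of $N_i(D)$ with the modular coefficient and the verification that the Eisenstein main term is genuinely of order $h(D)$ for each individual $E_i$, rather than merely on average; the growth of $N_i(D)$ then rests on playing Siegel's lower bound against Duke's power-saving error, which is precisely what forces the resulting bound on $|D|$ to be ineffective.
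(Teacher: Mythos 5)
Your proposal is correct and follows essentially the same route the paper attributes to Elkies--Ono--Yang: reduce the divisibility to the weighted embedding numbers recorded by the Gross-lattice theta coefficients (via Deuring and the Gross--Zagier/Elkies lifting correspondence), split off the Eisenstein part of size $\asymp h(D)$, bound the cuspidal part by the Iwaniec--Duke estimate $\ll |D|^{1/2-\delta}$, and beat it with Siegel's ineffective bound $h(D)\gg_\epsilon |D|^{1/2-\epsilon}$. You also correctly identify Siegel's bound as the sole source of ineffectiveness, which is exactly the point the paper's GRH-conditional Theorem \ref{multthm} is designed to remove.
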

Here the implied constant depends on $p$ and $t$.  Their result states that for sufficiently large $D$ there are at least $t$ nonisomorphic elliptic curves with CM by $\order_D$ which reduce to each supersingular elliptic curve of characteristic $p$.  We are again able to obtain an effective but conditional result of this nature.  For a supersingular elliptic curve $E$, define $w_E$ to be the number of automorphisms of $E$ and take the canonical measure
$$
\mu(E) :=\frac{1/w_E}{\sum_{E'}1/w_{E'}},
$$
where the sum is taken over all supersingular elliptic curves of characteristic $p$.  We will denote the minimal value of this measure by $\mu_p$.
\begin{theorem}\label{multthm}
Assume GRH.  For a prime $p$ and $0<c<1$ there is a effectively computable constant $D_{p,c}\in \nature$ such that every fundamental discriminant $D<0$ with $|D|\geq D_{p,c}$ for which $p$ is not split satisfies 
$$
S_p(x)^{c\mu_p h(D)}\mid \h_D(x)
$$
over $\field_p[x]$.  
\end{theorem}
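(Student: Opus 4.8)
The plan is to reinterpret the multiplicity of $S_p(x)$ in $\h_D(x)$ as a counting problem on the supersingular locus and then to run the same effective analytic machinery underlying Theorem~\ref{BoundTheorem}, now tracking a quantitative lower bound rather than mere nonvanishing. For a supersingular curve $E/\field_{p^2}$, let $m_E(D)$ denote the multiplicity of $j(E)$ as a root of $\h_D(x)$ over $\field_p$; by Deuring's theory this is precisely the number (with multiplicity) of singular moduli of discriminant $D$ reducing to $E$, equivalently the number of optimal embeddings of $\order_D$ into $\End(E)$ up to $\mathrm{Aut}(E)$. Since $\deg \h_D = h(D)$ we have $\sum_E m_E(D) = h(D)$, so the divisibility $S_p(x)^{c\mu_p h(D)}\mid \h_D(x)$ is equivalent to the single inequality $\min_E m_E(D) \geq c\mu_p h(D)$, and it suffices to produce an effective lower bound for $m_E(D)$ that is uniform in $E$.

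First I would package the quantities $m_E(D)$, as $D$ varies, into the Fourier coefficients of a weight $3/2$ form: each $E$ gives the positive definite ternary quadratic form $Q_E$ of Section~\ref{smallDsection}, whose theta series lies, after projection to Kohnen's plus space, in the relevant space of half-integral weight modular forms, and $m_E(D)$ equals, up to the automorphism factor $w_E$, the $|D|$-th representation number $r_{Q_E}(|D|)$. Decomposing $\theta_{Q_E}$ into its Eisenstein part and a cusp form $g_E$, the Siegel--Weil formula identifies the Eisenstein contribution with the genus average, whose dependence on $E$ is exactly the mass weight $1/w_E$. The resulting main term is therefore proportional to $\mu(E)\,h(D)$, and the constant of proportionality is forced to be $1$ by $\sum_E m_E(D)=h(D)$ together with $\sum_E \mu(E)=1$. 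Thus one obtains $m_E(D)=\mu(E)h(D)+a_{g_E}(|D|)$ up to a lower-order Eisenstein contribution, where $a_{g_E}$ is the half-integral weight cusp form coefficient.

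The crux is to bound the error $a_{g_E}(|D|)$ effectively. Here I would invoke the Shimura correspondence to pass to a weight $2$ newform $f_E$ and the Waldspurger-type identity expressing $|a_{g_E}(|D|)|^2$ in terms of the central value $L(f_E\otimes\chi_D,\tfrac12)$; GRH for these $L$-series yields the Lindel\"of bound $L(f_E\otimes\chi_D,\tfrac12)\ll_{p,\epsilon}|D|^{\epsilon}$ with an \emph{effective} implied constant, whence $a_{g_E}(|D|)\ll_{p,\epsilon}|D|^{1/4+\epsilon}$ effectively. Simultaneously, GRH for the Dirichlet $L$-function $L(s,\chi_D)$ together with the class number formula gives an effective lower bound of the shape $h(D)\gg \sqrt{|D|}/\log\log|D|$ via the Ono--Soundararajan estimates already used for Theorem~\ref{BoundTheorem}. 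Combining these with $\mu(E)\geq\mu_p$ gives, for every supersingular $E$,
$$
m_E(D)\;\geq\;\mu_p\,h(D)-\bigl|a_{g_E}(|D|)\bigr|\;\geq\;\mu_p\,h(D)-C_p\,|D|^{1/4+\epsilon}.
$$

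Finally I would solve for the threshold. Since the error is of strictly smaller order than $\mu_p h(D)$, the inequality $(1-c)\mu_p h(D)\geq C_p|D|^{1/4+\epsilon}$ holds for all $|D|$ beyond an explicit bound, and for such $|D|$ we obtain $m_E(D)\geq c\mu_p h(D)$ for every $E$ simultaneously, hence the asserted divisibility. Making the constant $C_p$ and the effective class-number bound numerical then produces the effectively computable $D_{p,c}\in\nature$; the positive factor $(1-c)$ is exactly the slack that forces the dependence of $D_{p,c}$ on $c$. The main obstacle is the effectivity of the error bound: one must carry explicit constants through the Shimura lift, the Waldspurger formula, and the conditional Lindel\"of estimate, which is precisely the analytic bookkeeping that GRH permits and that parallels the surjectivity argument, the only new feature being that the target is now the positive quantity $c\mu_p h(D)$ rather than $1$.
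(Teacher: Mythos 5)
Your proposal is correct and takes essentially the same route as the paper: Theorem \ref{multthm} is reduced to the surjectivity argument by replacing the hypothesis $a_{\theta_E}(|D|)\leq 0$ with $a_{\theta_E}(|D|)\leq c\,a_{H_{\theta}}(|D|)$, which multiplies the left-hand side of equation (\ref{setupeqn}) by the slack factor $(1-c)$ you identify and again reduces the whole matter to the GRH bound on $F(1)$, the Eisenstein coefficient supplying the main term proportional to $\mu(E)h(D)$ exactly as you describe. The only cosmetic difference is that you bound the twisted central value and the class-number term separately (conditional Lindel\"of for $L_i(1)$ plus an effective lower bound for $L(1)$), whereas the paper bounds the single ratio $F(1)=L_i(1)/L(1)^2$ via the Ono--Soundararajan method, which is how its explicit numerical constants are actually extracted.
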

Since $h(D)\to \infty$ effectively as $D\to -\infty$ (Oesterl\'{e} \cite{Oesterle1} unconditionally showed the growth is $\Omega(\log(|D|)^{1-\epsilon})$, but Siegel \cite{Siegel1} obtained $\Omega(|D|^{1/2-\epsilon})$ conditional on GRH), we also get an effective but conditional version of Elkies, Ono, and Yang's result by choosing for each $t\in\nature$ an integer $D_{p,t}\geq D_{p,c}$ large enough so that $c\mu_p h(D)>t$ for every $|D|\geq D_{p,t}$. 
In Section \ref{QuadPaperSection} we will see that Theorem \ref{multthm} reduces to the same argument given to show Theorem \ref{BoundTheorem} but we will not explicitly compute the bound here.

The paper will begin by reviewing the connection between theta series and $\exc_p$ in Section \ref{CMliftsection}.  In Section \ref{QuadPaperSection}, we review how the bound for coefficients of theta series is obtained.  Given the connection from Section \ref{CMliftsection}, this gives a good bound for $p$, dependent on numerically calculating certain constants.  In Section \ref{calcsection}, we fix a basis and decompose a certain space of modular forms in order to calculate some of the constants obtained from Section \ref{QuadPaperSection}.  Furthermore, we give explicit algorithms for calculating the remaining constants.  In Section \ref{smallDsection}, we use a trick based on the Ibukiyama's classification \cite{Ibukiyama1} of the set of supersingular elliptic curves defined over $\field_p$, in order to calculate the set of $|D|<D_p$ which are generated by $Q_{E}$.  Finally, in Appendix \ref{datasection} we give the data obtained by explicitly implementing the algorithms from Sections \ref{calcsection} and \ref{smallDsection} for $p\leq 107$.

\begin{it}Acknowledgements.\end{it}
The author would like to thank T.H. Yang for his help and guidance and would also like to thank K. Bringmann, K. Ono, and J. Rouse for useful comments.

\section{CM Liftings of Supersingular Elliptic Curves and Theta Series}\label{CMliftsection}

For a supersingular elliptic curve $E$ we will say that $D_E$ is a \begin{it}good bound\end{it} for $E$ if $E$ is in the image of the reduction map for every $|D|>D_E$.  Hence we will piecewise determine a good bound $D_p$ for $p$ by determining a good bound $D_E$ for each supersingular elliptic curve $E/\field_{p^2}$ and then taking $D_p:=\max_E \ D_E$, relying on the fact that there are only finitely many supersingular elliptic curves (up to isomorphism).
This also aids in computing the elements $D\in \exc_p$ with $|D|<D_p$, since we only need to check all $|D|<D_E$ for each curve, and not up to the larger bound $D_p$.  The theory involved in determining $D_E$ goes through quaternion algebras, quadratic forms, theta series, and modular forms.  For background information on elliptic curves a good reference is Silverman's book \cite{Silverman1}.  A good reference for quaternional algebras is Vigernas's book \cite{Vigneras1}, while Ono's book \cite{Ono1} contains a good introduction to modular forms.  Good sources of information about quadratic forms can be found in Jones' book \cite{Jones1} and O'Meara's book \cite{o'meara}.

Let $E/\field_{p^2}$ be a supersingular elliptic curve.  An elliptic curve $\widetilde{E}$ with CM by $\order_D$ is a \begin{it}CM (by $\order_{D}$) lift\end{it} of $E$ if the reduction of $\widetilde{E}$ equals $E$.  We will now review the connection between CM liftings and theta series.  Let $R_E:=End(E)$ be a maximal order of the quaternion algebra $B_p/\rational$ ramified precisely at $p$ and infinity.  For $p$ inert in $\order_{D}$ (resp.\ $p$ ramified in $\order_{D}$) there is a one-to-one (resp.\ two-to-one) correspondence between lifts of $E$ and embeddings of $\order_{D}$ in $R_E$.  Gross and Zagier \cite[Prop. 2.7]{GrossZagier1} cover the case of $p$ inert and Elkies \cite[p. 168]{Elkies1} extends this to the case where $p$ is ramified.  Let $L_E:=\{ x\in \integer+2R_E| tr(x)=0\}$ be the \begin{it}Gross lattice\end{it} with the associated positive definite ternary quadratic form $Q_E$ given by the reduced norm on $L_E$.

For a positive definite ternary quadratic form $Q$ we will define the \begin{it}theta series\end{it} of $Q$ by
$$
\theta(z):=\theta_Q(z) := \ssum{a,b,c\in\integer}{}q^{Q(a,b,c)},
$$
where $z$ is in the upper half plane and $q:=e^{2\pi i z}$.  Denote the theta series of $Q_E$ by 
$$
\theta_{E}(z) := \sum_{x\in L_E} q^{Q_E(x)} = \sum_{\substack{d<0\\ d\equiv 0,1\pmod 4}} a_{E}(d)q^{|d|}.
$$
Noting that $w_E=\# R_E^*$, Gross \cite[Prop. 12.9, p. 172]{Gross1} has shown that $a_E(D)$ equals $\frac{w_E}{\# \order_{D}^*}$ times the number of embeddings of $\order_{D}$ into $R_E$.  It is hence sufficient to proceed by bounding the coefficients of the theta series from below, and showing that they must be positive whenever $|D|>D_E$.  However, Gross showed that $\theta_E$ is a weight $3/2$ modular form in Kohnen's plus space (see \cite{Kohnen1} or \cite[p. 54]{Ono1} for a definition) of level $4p$ and explicit bounds (conditional on GRH) for coefficients of theta series in this space were established by the author \cite{Kane2}.  The methods used to obtain these bounds will be reviewed in Section \ref{QuadPaperSection}.

\section{Background}\label{QuadPaperSection}

This section is a brief summary of the theory used to bound the coefficients of $\theta_E$.  The theta series is first decomposed into a linear combination of an Eisenstein series and a basis of weight $3/2$ Hecke eigenforms.  Using an isomorphism to weight 2 cusp forms, the coefficients of the Hecke eigenforms are then compared with the central values of quadratic twists of $L$-series of weight 2 newforms.  The central value of these $L$-series are bounded in the authors's generalization \cite{Kane2} of Ono and Soundararajan's paper \cite{OnoSound1} in terms of constants which we will introduce here.  Section \ref{calcsection} will be devoted to explicitly determining the basis of weight $3/2$ Hecke eigenforms, the isomorphism to weight 2 newforms, and explicitly bounding these constants.

For $k\in\integer$ and $N\in\nature$ we will denote the space of (holomorphic) modular forms of weight $k$ and level $N$ by $M_k(N)$, the cuspidal subspace by $S_k(N)$, and the space of newforms by $S_k^{\text{new}}(N)$.  Moreover, for $k\in \frac{1}{2}\integer\wout \integer$ we will denote Kohnen's plus space of level $4N$ by $M_k^+(4N)$ and the cuspidal subspace by $S_{k}^+(4N)$.  For a modular form $g$, we denote the $n$-th Fourier coefficient by $a_g(n)$.  

Let $g_1,\dots g_r$ be a basis of Hecke eigenforms in $S_{3/2}^+(4p)$ and define the Eisenstein series
$$
H_{\theta}(z):= \frac{12}{p-1} + \sum_{d<0,\ d\equiv 0,1\pmod{4}} \frac{12}{p-1}\cdot \frac{1-\left(\frac{d}{p}\right)}{2} H(d/p^{2e_d}) q^{|d|},
$$
where $H(d)$ is the Hurwitz class number and $p^{e_d}$ is the highest power of $p$ dividing the square part of $|d|$.  In particular, for $D<0$ a fundamental discriminant, $H(D)$ equals the class number of the imaginary quadratic field $\rational(\sqrt{D})$ divided by the number of units modulo $\pm 1$.  Gross\cite{Gross1} has shown that $\theta_E-H_{\theta}\in S_{3/2}^+(4p)$.  We hence decompose our theta series as
$$
\theta=H_{\theta}+\ssum{i=1}{t_p-1} b_ig_i,
$$
for some $b_i\in \cplex$.  Here $t_p$ is the number of distinct conjugacy classes of maximal orders of $B_p$, called the \begin{it}type number\end{it}.

For a fundamental discriminant $D<0$ with corresponding Kronecker character $\chi_{D}$, define the $|D|$-th Shimura correspondence $S_{|D|}$  by 
$$
\ssum{n=1}{\infty}\frac{a_{g|S_{|D|}}(n)}{n^s}:=L(\chi_D,s)\ssum{n=1}{\infty}\frac{a_g(|D|n^2)}{n^s}
$$
for every $g\in S_{3/2}^+(4p)$.  Here and throughout we denote the image of $g$ under an operator $T$ by $g|T$.  Shimura \cite{Shimura1} showed that $g|S_{|D|}\in S_2(4p)$ and that $S_{|D|}$ commutes with every Hecke operator, namely 
$$
f|_{3/2} T_{\ell^2} |S_{|D|} = (f|S_t) |_2 T_{\ell}
$$
for every $f\in S_{3/2}^+(4p)$ and every prime $\ell\neq p$.  Let $t_i\in\integer_{>0}$ be minimal with $4\mid t_i$ and $-t_i$ a fundamental discriminant satisfying $a_{g_i}(t_i)\neq 0$.  If $r_{j}$ are chosen so that $\sum_{j=1}^{t_p-1} r_{j} a_{g_i}(t_j)\neq 0$ for every $1\leq i\leq t_p-1$, then Kohnen \cite{Kohnen1} has shown that the linear combination of Shimura correspondences
\begin{equation}\label{ShimLiftEqn}
S:=\sum_{i=1}^{t_p-1} r_{i} S_{t_i},
\end{equation}
called a \begin{it}Shimura lift\end{it}, forms an isomorphism from $S_{3/2}^+(4p)$ to $S_2^{\text{new}}(p) = S_2(p)$ which sends Hecke eigenforms to Hecke eigenforms. 

Denote the Shimura lift $g_i|S$ by $G_i$.  For a fundamental discriminant $D<0$ and $\re(s)>1$, we denote the $L$-series of $\chi:=\chi_D$ by 
$$
L(s):=L(\chi,s) := \sum_{n=1}^{\infty} \frac{\chi(n)}{n^s}
$$
and for $\re(s)>\frac{3}{2}$ we denote the $L$-series of $G_i$ twisted by $\chi$ as
\begin{equation}
L_i(s):=L(G_i,D,s):=\ssum{n=1}{\infty}\frac{\chi(n)a_{G_i}(n)}{n^s}.
\end{equation}
The conductor of $L_i(s)$ is $q:=p|D|^2$.  Denote by $m_i$ the smallest integer such that $a_{g_i}(m_i)\neq 0$ with $(p,m_i)=1$.  

Using the fact that $G_i$ is a Hecke eigenform, the author \cite{Kane2} showed that any fundamental discriminant $D<0$ satisfying $a_{\theta}(|D|)=0$ must also satisfy
\begin{equation}\label{setupeqn}
\frac{12}{(p-1)\pi 2^{\frac{v_p(|D|)}{2}}}\cdot |D|^{\frac{1}{4}} \leq \sqrt{\ssum{i=1}{t_p-1}|b_i|^2}\sqrt{ \ssum{i=1}{t_p-1}c_i\frac{L_i(1)}{L(1)^2}},
\end{equation}
where 
\begin{equation}\label{cieqn}
c_i:=\frac{|a_{g_i}(m_i)|^2}{L(G_i,m_i,1)m_i^{\frac{1}{2}}}
\end{equation}
is a constant which comes from taking the ratio of the $|D|$-th coefficient and the $m_i$-th coefficient in the Kohnen-Zagier formula \cite{KohnenZagier1}.    Define 
\begin{equation}
F(s):=F_i(s):=\left(\frac{\sqrt{q}}{2\pi}\right)^{s-1}\frac{L_i(s)\Gamma(s)}{L(s)L(2-s)}
\end{equation}
and choose $1<\sigma<\frac{3}{2}$.  To obtain a contradiction from equation (\ref{setupeqn}) for $|D|$ sufficiently large, it remains to bound $F(1)=\frac{L_i(1)}{L(1)^2}$ from above.  

Since equation (\ref{setupeqn}) is obtained by assuming $a_{\theta}(|D|)\leq 0$ and rearranging, we can similarly assume $a_{\theta}(|D|)\leq c a_{H_{\theta}}(|D|)$ for a constant $0<c<1$ and obtain equation (\ref{setupeqn}) with the left hand side multiplied by $1-c$.  Hence Theorem \ref{multthm} is also reduced to bounding $F(1)$ and the details are left to the reader.  

We will describe briefly how Ono and Soundararajan \cite{OnoSound1} bounded $F(1)$.  By the functional equation of $F(s)$ and the Phragm\'{e}n-Lindel\"{o}f principle we have
$$
F(1)\leq \max_{t\in\real} F(\sigma+it),
$$
so it suffices to bound $F(\sigma+it)$ from above for every $t\in\real$.

For a real number $\x>0$ and an $L$-series $\widetilde{L}(s)$ with $c>0$ real chosen such that $s+c$ is in the region of absolute convergence, consider the integral
\begin{equation}\label{Linteqn}
\int_{c-i\infty}^{c+i\infty} \frac{\widetilde{L}'}{\widetilde{L}}(s+w) \Gamma(w) \x^w dw.
\end{equation}
On the one hand, if $\widetilde{L}(s)=\sum_{n=1}^{\infty} \frac{a(n)}{n^s}$ in the region of convergence, then (\ref{Linteqn}) can be computed as the sum 
\begin{equation}\label{eval1eqn}
\sum_{n=1}^{\infty} \frac{\Lambda(n) a(n)}{n^s} e^{-n/\x}
\end{equation}
using the fact that 
$$
\int_{c-i\infty}^{c+i\infty} \Gamma(w) \left(\frac{\x}{n}\right)^w dw = \sum_{m=0}^{\infty}\frac{(-1)^m}{m!}\left(\frac{n}{\x}\right)^m=e^{-n/\x},
$$
which follows by shifting the integral $\re(w)\to -\infty$ and counting the residues at negative integers.  On the other hand, we can shift the original integral to the left and count the contribution from residues at each of the poles.  The contribution from $w=0$ gives $\frac{\widetilde{L}'}{\widetilde{L}}(s)$.  The assumption of GRH allows us to determine the real part of all of the poles coming from $\frac{\widetilde{L}'}{\widetilde{L}}$, since these correspond to zeros of $\widetilde{L}(s)$.  Rearranging the resulting equation gives a formula for $\frac{\widetilde{L}'}{\widetilde{L}}(s)$ which we integrate to get a formula for $\log|\widetilde{L}(s)|$, as shown by Ono and Soundararajan \cite[Lemmas 1-2]{OnoSound1}.

Using the above argument with $\widetilde{L}(s)=L(s)$, we define for $\re(s)>1$ the integral of equation (\ref{eval1eqn}) by
\begin{equation}\label{defineGeqn}
\G(s,\x):=\ssum{n=1}{\infty}\frac{\Lambda(n)\chi(n)}{n^s\log(n)}e^{-n/\x}.
\end{equation}
Similarly, with $\lambda_i$ defined so that $\frac{L_i'}{L_i}(s)=\ssum{n=1}{\infty}\frac{\lambda_i(n)\chi(n)}{n^s}$ when $\re(s)>3/2$, define 
\begin{equation}\label{F1eqn}
\F_1(s,\x):=\ssum{n=1}{\infty}\frac{\lambda_i(n)\chi(n)}{n^s} e^{-n/ \x}
\end{equation}
and $$
\F(w,\x):=\ssum{n=1}{\infty} \frac{\lambda_i(n)\chi(n)}{n^w\log(n)}e^{-n/\x} = \int \F_1(w,\x) dw.
$$

For $s=\sigma+it$, $s_0=2-\sigma+it$ and $s_2=\sigma_2+it$ for any choice $\sigma<\sigma_2<2$, the following bound \cite{Kane2} was obtained for $F(s)$.  For certain explicit constants $c_{\theta,\sigma,\x,1}$, $c_{\theta,\sigma,\x,t,1}$, $c_{\theta,\sigma,\x,m,1}$, $c_{\theta,\sigma,\x,2}$, $c_{\theta,\sigma,\x,t,2}$, and $c_{\theta,\sigma,\x,q,2}$
\begin{multline*}
\log|F(s)|\leq \frac{\x}{\x+1}\F(s,\x) - \frac{\x((2 + \gamma(\x))\alpha(\x)-\beta(\x)}{(\x+1)(1+\gamma(\x))}\F_1(s_2,\x)\\
 -  \frac{\x}{\x-1-\delta(\x)\x}\left(\ \re(\G(s_0,\x))-\re(\G(s,\x))\right)
+ c_{\theta,\sigma,\x,2} + c_{\theta,\sigma,\x,t,2} + c_{\theta,\sigma,\x,q,2}\\
 - \left( c_{\theta,\sigma,\x,1} + c_{\theta,\sigma,\x,t,1} + c_{\theta,\sigma,\x,m,1}\right) + \log |\Gamma(s)| - 2\log|L(s)|,
\end{multline*}
where $\alpha(\x)$, $\beta(\x)$, $\gamma(\x)$ and $\delta(\x)$ are defined in the author's previous paper \cite{Kane2} in equations (7.2), (7.1), the line directly proceeding (7.1), and the first equation in Section 6, respectively.  Moreover, in Section 8 of that paper, explicit bounds in terms of $\Gamma$-factors are given for $\alpha(\x)$, $\beta(\x)$, $\gamma(\x)$, and $\delta(\x)$.  

The decay in $\Gamma(s)$ cancels polynomial growth in $t$ from $c_{\theta,\sigma,\x,t,i}$.  Since $\sigma>1$, $L(s)$ converges absolutely, so we can explicitly calculate a bound for $2\log|L(s)|$ as well.  The other terms involving $\F$, $\G$, and $\F_1$ are also dealt with \cite{Kane2} (Here, we use cancellation in the sums for small $n$ between terms from $2\log|L(s)|$, and then bound the remaining terms separately).  We will give further details in Section \ref{calcsection} of how to compute better bounds for these constants.  

Therefore, the main goal of this paper will be to decompose Kohnen's plus space, make a choice of $g_i$, determine a Shimura lift, and then calculate $b_i$ and $c_i$.  This is described in Section \ref{calcsection}.  Moreover, in feasible cases we must determine an algorithm to determine whether $|D|$ is represented by a fixed form $Q$.  A specialized algorithm is given in Section \ref{smallDsection} to determine this whenever the corresponding elliptic curve is defined over $\field_p$.

\section{Algorithm to compute $D_E$ and $D_p$}\label{calcsection}

This section is broken into four main subsections.  We first determine the set of theta series $\theta_E$ for every supersingular elliptic curve $E/\field_{p^2}$.  We then determine a basis of Hecke eigenforms $\{ g_i: i \in \{1,\dots,t_p-1\} \}$ for the subspace of $S_{3/2}^{+}(4p)$ generated by    these theta series and express the cuspidal part of the theta series as a linear combination of these eigenforms.  The third step will be to compute an explicit Shimura lift $S$ from $S_{3/2}^+(4p)$ to $S_2(p)$ and finally we compute the constants corresponding to $G_i=g_i|S$.

\subsection{Calculating the Theta Series.} 
Let $p$ be a prime and $C>0$ be an integer.  We will describe here how to obtain the quadratic forms $Q_E$ and the first $C$ coefficients of $\theta_E$ for every supersingular elliptic curve $E/\field_{p^2}$.  If $C$ is chosen too small for the remaining calculations, then we will simply double $C$ and rerun the calculations.

We begin by calculating the maximal orders $R_E$ using Kohel's \cite{Kohel1} algorithm built into MAGMA.  These are obtained by first using a function to calculate a single maximal order $R$, then calculating all left ideal classes $I_i$ of $R$, and finally calculating the right order $R_i$ of $I_i$, which will give a full set of maximal orders.  We will represent $R_E$ as a $4\times 4$ matrix over $\rational$.  Let the standard basis of $B_p$ over $\rational$ be given by $1, \alpha,\beta$ and $\gamma=\alpha\beta=-\beta\alpha$ with $\alpha^2=p$ and $\beta^2=q$ for some prime $q\equiv 3\pmod {8}$ for which $\left(\frac{-q}{p}\right)=-1$ (cf. \cite[p. 1]{Ibukiyama1}).  Then a matrix $A$ will correspond to the $\integer$-module which is generated by $A_{1,j} + A_{2,j}\alpha +A_{3,j}\beta + A_{4,j}\gamma$.  It is then straightforward to compute $L_E$ and the method of Fincke and Pohst \cite{FinckePohst1} may be used to compute the coefficients $a_E(d)$ for every $d<C$.

\subsection{Decomposition of $\theta_E$.}
We will now compute a basis of (cuspidal) Hecke eigenforms $g_1,\dots g_{t_p-1}$ of the space spanned by all of our theta series and then decompose the cuspidal part $g_E:=\theta_E-H_{\theta}$ in terms of these Hecke eigenforms.  This will give us the coefficients $b_i$ from section \ref{QuadPaperSection}.  

A computational solution to the decomposition problem for integral weight forms follows from the work of Stein \cite{Stein1} on modular symbols.  We recall that Kohnen \cite{Kohnen1} has shown that the Hecke algebra on $S_{3/2}^{+}(4p)$ is isomorphic to the Hecke algebra on $S_2^{\text{new}}(p)=S_2(p)$.  Furthermore, Sturm has shown for $S_2(p)$ that a finite set of Hecke operators generates the Hecke algebra and has given an effectively computable bound $N$ so that $\left\{ T_{n}| n\leq N\right\}$ generates the Hecke algebra \cite{Sturm1}.  The Hecke eigenspaces of distinct normalized Hecke eigenforms on $S_2(p)$ are at most one dimensional (that is, $S_2(p)$ satisfies \begin{it}(strong) multiplicity one\end{it}, cf. \cite[p. 29]{Ono1}).  Therefore $S_{3/2}^+(4p)$ also satisfies multiplicity one.

We first note that the space generated by our theta series is invariant under the action of the Hecke algebra, so that the space is generated by the set of $g_E$ (which have coefficients in $\rational$).  Since $S_{3/2}^+(4p)$ has multiplicity one, we can diagonalize the Hecke operators $T:=T_{n^2}$ simultaneously.  We only need to diagonalize the operators for $n\leq N$, where $N$ is the Sturm bound on $S_2(p)$ as above.  Checking computationally, it appears as though a single $g_E=:g$ always generates the entire space under the action of the Hecke algebra (we have checked for all $p<1000$) and we will demonstrate how to obtain a basis of Hecke Eigenforms in this case.  This assumption is not really restrictive because one merely needs to follow the same argument for the forms $g_{E_1},g_{E_2},\dots$ until the dimension equals $t_p-1$.  We may also choose a particular $T$ such that $g|T^m$ generates the entire subspace (see Stein's book \cite[p. 167]{Stein2}).  

Given $g$ and $T$ as above we calculate $g|T^m$ for every $0\leq m<t_p$.  Then using linear algebra over $\rational$ we obtain $g|T^{t_p-1}$ as a linear combination of $g|T^m$ with $0\leq m<t_p-1$, giving a matrix $M_T$ with rational coefficients.  Let $F$ be the Galois splitting field over $\rational$ of the characteristic polynomial of $M_T$.  Since $g|T^m$ has coefficients in $\rational$ we can diagonalize $M_T$ to obtain Hecke eigenforms with coefficients in $F$.  Since $S_{3/2}^+(4p)$ has multiplicity one and $T$ generates the whole space, the eigenspace of a given eigenvalue has dimension one.  Hence we may calculate with linear algebra over $F$ the unique eigenform $g_i$ with eigenvalue $\lambda_i$.

We can now decompose each $g_E$ as a linear combination of the $g_i$ by linear algebra over $F$.  This gives the desired coefficients $b_i\in F$ in the decomposition
$$
g_E = \sum_{i=1}^{t_p-1} b_i g_i.
$$

\subsection{Finding a Shimura Lift.}
Having established the Hecke eigenforms $g_i$, we will now choose $t_i$ and $r_i$ as in equation (\ref{ShimLiftEqn}) to establish a Shimura lift.  We will recursively choose $t_{\ell}$ and $r_{\ell}$ such that 
$$
S(\ell):= \sum_{j=1}^{\ell} r_j S_{t_j}
$$
satisfies $g_i|S(\ell)=0$ if and only if $a_{g_i}(t_j)=0$ for every $1\leq j\leq \ell$.  

At each step we choose $i$ smallest such that $g_i|S(\ell-1)=0$.  We then choose $t_{\ell}$ to be the smallest integer with $4\mid t_{\ell}$, $-t_{\ell}$ is a fundamental discriminant, and $a_{g_i}(t_{\ell})\neq 0$, noting existence has been shown by Kohnen \cite{Kohnen1}.  It remains to choose $r_{\ell}$ such that for every $k$ we have $\sum_{j=1}^{\ell} r_j a_{g_k}(t_j)=0$ if and only if $a_{g_k}(t_j)=0$ for $1\leq j\leq \ell$.  Since $F=\rational(\alpha)$ is a number field we may consider $F$ as a vector space over $\rational$ with basis $\alpha^i$.  

Let $k$ be given such that $a_{g_k}(t_j)\neq 0$ for some $j$.  If $a_{g_k}(t_{\ell})=0$, then we know by inductive hypothesis that $g_k|S(\ell-1)+r_{\ell}S_{t_{\ell}}\neq 0$ for any $r_{\ell}$.  If $a_{g_k}(t_{\ell})\neq 0$, then writing it in terms of the basis, we have $a_{g_k}(t_{\ell})=\sum d_m \alpha^m$ with some $d_m\neq 0$.  Computing 
$$
\sum_{j=1}^{\ell-1} r_{j} a_{g_k}(t_j)
$$
and rewriting in terms of the basis, we write the coefficient $e_m\in \rational$ of $\alpha^m$.  We then take 
$$
r_{\ell,k}:=\left|\frac{e_m}{d_m}\right| + \frac{1}{2}.
$$
Taking $r_{\ell}:=\max_k r_{\ell,k}$, we have 
$$
|r_{\ell}d_m| > |e_m|
$$
and hence $r_{\ell}d_m+e_m\neq 0$.  It follows that $g_k|S(\ell)\neq 0$ because the coefficient of $\alpha^m$ in the first Fourier coefficient is nonzero.  Since $k$ was arbitrary, we have $g_i|S(\ell)=0$ if and only $a_{g_i}(t_j)=0$ for every $1\leq j\leq \ell$, as desired.  We then terminate if $g_i|S(\ell)\neq 0$ for every $i$ and otherwise continue the recursion.

\subsubsection{Calculating $c_i$}
Recall first that 
$$
c_i=\frac{|a_{g_i}(m_i)|^2}{L(G_i,m_i,1)m_i^{1/2}}
$$
for $m_i$ a fixed integer such that $a_{g_i}(m_i)\neq 0$ and 
$m_i\neq 0 \pmod{p}$ and $G_i=g_i|S$.  We may simply choose $m_i$ to be the smallest such integer.  In the bounds that we obtain it will suffice to bound $|c_i|$ from above.

We have already shown how to calculate $a_{g_i}(m_i)$, so it remains to calculate $L(G_i,m_i,1)$.  We use the following formula of Cremona \cite{Cremona1},
$$
L(G_i,m_i,1)=\ssum{n=1}{\infty} 2a_L(n)\chi(n)e^{-2\pi \frac{n}{m_i\sqrt{p}}}.
$$
Calculating the partial sum up to a fixed bound $N$ and noting by Deligne's optimal bound \cite{Deligne1} that $|a_L(n)|\le \sigma_0(n)n^{\frac{1}{2}}$, we may bound the error easily by pulling the absolute value inside the sum for $n>N$.

\subsection{Calculating the other constants}

These constants are actually fairly easy to calculate once we show clearly 
where they come from, given the theoretical results stated in the author's previous paper \cite{Kane2}.  The methods involved and notation used are similar to those used by Ono and Soundararajan \cite{OnoSound1}.  

Most of the constants obtained are explicit in terms of $\Gamma$ and $\zeta$ factors along the real line, but we need some work to calculate the terms involving $\F$, $\F_1$, and $\G$ (coming from equation (\ref{eval1eqn}).  Define $v(n,\x)$ by
\begin{multline*}
v(n,\x):=c_{\theta,\x,1,\F}\frac{\lambda_i(n)e^{-n/\x}}{n^{\sigma}} + c_{\theta,\x,1,\F_1}\frac{\log(n)\lambda_i(n)e^{-n/\x}}{n^{\sigma_2}}\\
 -  c_{\theta,\x,2,G} \left( \frac{\Lambda(n)e^{-n/\x}}{n^{\sigma_0}} - \frac{\Lambda(n)e^{-n/\x}}{n^{\sigma}}\right),
\end{multline*}
where $\sigma=\re(s)$, $\sigma_0=\re(2-s)$, and $\sigma_2=\re(s_2)$, so that 
\begin{multline*}
\ssum{n=2}{\infty} \re\left( \frac{\chi(n)}{n^{it}\log(n)} v(n,\x)\right) = c_{\theta,\x,1,\F}\re(\F(s,\x)) + c_{\theta,\x,1,\F_1} \re(\F_1(s_2,\x))\\ 
- c_{\theta,\x,2,G} \re(\G(s_0,\x)-\G(s,\x)).
\end{multline*}

We will bound the following to get a constant independent of the variables 
involved.  From above, we need to bound 
\begin{equation}\label{boundLminus}
 -2\log |L(s)| + 2\ssum{n=2}{N_0} \re\left(\frac{\chi(n)\Lambda(n)}{n^s\log(n)}\right).
\end{equation}
Noting that $s$ is in the region of absolute convergence,
$$
\log(|L(s)|)=\ssum{n=2}{\infty} \re\left(\frac{\chi(n)\Lambda(n)}{n^s}\log(n)\right).
$$
Then equation (\ref{boundLminus}) becomes
$$
-2\log(|L(s)|) + 2\ssum{n=2}{N_0} \re\left(\frac{\chi(n)\Lambda(n)}{n^s\log(n)}\right)= - 2\ssum{n=N_0+1}{\infty} \re\left(\frac{\chi(n)\Lambda(n)}{n^{s}\log(n)}\right).
$$
Therefore, taking the absolute value inside the sum gives
$$
2\left|\ssum{n=N_0+1}{\infty} \frac{\chi(n)\Lambda(n)}{n^{s}\log(n)}\right|\leq 2\ssum{n=N_0+1}{\infty} \frac{\Lambda(n)}{n^{\sigma}\log(n)}=  2 \log(|\zeta(\sigma)|) -\ssum{n=2}{N_0+1}\frac{\Lambda(n)}{n^{\sigma}\log(n)},
$$
and this final finite sum and $\zeta(\sigma)$ are easily computed.

We also need a bound for the constants depending on $t$, the imaginary part of $s$.  We use the functional equation of the $\Gamma$ factor to remove the growth from these terms.  Since the growth is logarithmic in $t$ we easily obtain
\begin{equation}\label{boundtpart}
\log |\Gamma(s)| + c_{\theta,\x,1,t} - c_{\theta,\x,2,t} \leq \log |\Gamma(\sigma+r)|
\end{equation}
for some $r\in\nature$.

A computer is then used to bound
\begin{equation}\label{compbound}
\ssum{n=2}{N_0} \re\left( \frac{\chi(n)}{n^{it}\log(n)}\left( v(n,\x) -\frac{2\Lambda(n)}{n^{\sigma}} \right)\right).
\end{equation}
Notice that the term we are subtracting is exactly the term being added in equation (\ref{boundLminus}).  The only nonzero terms are $p$ powers, so the maximum is taken by calculating $ \frac{1}{\log(n)} \left( v(n,\x) -\frac{2\Lambda(n)}{n^{\sigma}}\right)$ for each $n=p^k$ and then noting that either $\chi(p^k)=\chi(p)^k$, which is either one or alternates.  Finding the $t$ which maximizes this sum for each $p$, independent of whether the sum alternates or not, gives the bound, since we then add up the absolute value of each of these terms together.

It remains to bound the terms coming from equation (\ref{eval1eqn}) with $n$ large.  We will hence look at 
\begin{equation}\label{boundFGpart}
\ssum{n=N_0+1}{\infty} \re\left( \frac{\chi(n)}{n^{it}\log(n)}( v(n,\x)) \right).
\end{equation}
Notice first, since $\sigma_2>\sigma$, that for $n$ sufficiently (namely we choose $N_0$ such that this occurs for $n>N_0$) the term from the $\F_1$ part of $v(n,\x)$ satisfies the bound
$$
c_{\theta,\x,1,\F_1} \frac{\log(n)}{n^{\sigma_2}}\leq \frac{c_{\theta,\x,1,\F}}{n^{\sigma}}.
$$
Therefore, we see that 
$$
|v(n,\x)|\leq e^{-n/\x}\left( 2c_{\theta,\x,1,\F}\frac{|\lambda_i(n)|}{n^{\sigma}} + c_{\theta,\x,2,G}\Lambda(n) \left(\frac{1}{n^{\sigma_0}}-\frac{1}{n^{\sigma}}\right)\right).
$$
Since $\lambda_i(n)\leq 2\sqrt{n}\log(n)$, we can further bound this by 
$$
c_{\theta,\x,v} \frac{\Lambda(n)}{n^{min(\sigma-1/2,\sigma_0)}}e^{-n/x}.
$$
In \cite{Kane2}, we have shown for $\alpha=\text{min}(\sigma-1/2,\sigma_0)$ an explicit constant $c_{N_0}$ such that
\begin{multline}
\HH(\alpha,\x):=\ssum{n=N_0+1}{\infty} \frac{\Lambda(n)}{n^\alpha\log(n)}e^{-n/x}\\
 \leq \frac{e^{-N_0/\x}}{N_0^\alpha\log(N_0)}(c_{N_0}N_0-\psi(N_0)) + \frac{c_{N_0}\x^{1-\alpha} }{\log(N_0)} \Gamma(1-\alpha, N_0/\x).
\end{multline}
We then calculate the incomplete Gamma factor $\Gamma(1-\alpha,N_0/\x)$ (cf. \cite{incomplete}), giving the desired bound.

\section{Determining CM Lifts for $|D|<D_E$ when $E$ is Defined over $\field_p$} \label{smallDsection}

In this section, we give an algorithm to determine whether $E/\field_p$ is in the image of the reduction map from elliptic curves with CM by $\order_{D}$ for a fixed $D$ to deal with $|D|<D_E$.  It is based on a classification of $R_E$ given by Ibukiyama \cite{Ibukiyama1}.

\subsection{Calculating which $|D|$ are Represented by the Gross Lattice}

\begin{lemma}\label{Fplemma}
Let $E$ be a supersingular elliptic curve defined over $\field_p$, $L_E$ be its associated Gross lattice, and $R_E^{0}$ be the lattice of trace zero coefficients.  Then there exists a lattice $L$ satisfying $L_E\subseteq L \subset R_E^{0}$ such that the reduced norm on $L$ is 
$$
Q(x,y,z)=px^2 + (by^2 + fyz + cz^2 ).
$$
 \end{lemma}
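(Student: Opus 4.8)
The plan is to exploit the one structural feature that distinguishes a supersingular curve defined over $\field_p$ from one merely defined over $\field_{p^2}$: its Frobenius endomorphism. Since $E/\field_p$ is supersingular its trace of Frobenius vanishes, so the $p$-power Frobenius $\pi \in R_E = \End(E)$ satisfies $\pi^2 = -p$; equivalently $\pi$ is a trace-zero element of reduced norm $p$, whence $\pi \in R_E^{0}$ and $\mathrm{Nrd}(\pi) = p$. This $\pi$ is exactly the vector that will produce the distinguished $px^2$ term. (Ibukiyama's explicit description of $R_E$ furnishes $\pi$ as the standard generator with $\pi^2 = -p$, and may be used to carry out every step below in coordinates.)

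First I would split the rational trace-zero space $B_p^{0}$ orthogonally, relative to the bilinear form $\langle x,y\rangle = \mathrm{tr}(x\bar y)$ attached to the reduced norm, as $\rational\pi \perp V$ with $V = \pi^{\perp}$; since $\langle\pi,\pi\rangle = 2\,\mathrm{Nrd}(\pi) = 2p \neq 0$ this is a genuine splitting. I then set $W := R_E^{0} \cap V$, a rank-two lattice because $R_E^{0}$ has rank three while $V$ has codimension one and $\pi \notin V$, and define $L := \integer\pi \oplus W$. This is an orthogonal direct sum lying inside $R_E^{0}$, and because the cross term $\langle\pi,w\rangle$ vanishes for $w \in W$, the reduced norm restricted to $L$ is block-diagonal: in the basis given by $\pi$ together with a basis of $W$ it reads $px^2 + Q_W(y,z)$, where $Q_W(y,z) = by^2 + fyz + cz^2$ is the binary form of the reduced norm on $W$. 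Thus $L \subseteq R_E^{0}$ and the norm has the required shape, so it remains only to show $L_E \subseteq L$.

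The heart of the matter is this containment. Writing an arbitrary $x \in L_E \subseteq R_E^{0}$ as $x = a\pi + w$ with $a \in \rational$ and $w \in V$, I must show $a \in \integer$; then $a\pi \in \integer\pi$ while $w = x - a\pi \in R_E$ is a trace-zero element of $\pi^{\perp}$, hence $w \in W$ and $x \in L$. Since $x \in \integer + 2R_E$ has trace zero one may write $x = n + 2s$ with $n \in \integer$ and $s \in R_E$, and then $\langle x,\pi\rangle = 2\,\mathrm{tr}(s\bar\pi)$, so that $a = \langle x,\pi\rangle/\langle\pi,\pi\rangle = \mathrm{tr}(s\bar\pi)/p$. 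Everything therefore reduces to the divisibility $\mathrm{tr}(s\pi) \equiv 0 \pmod{p}$ for all $s \in R_E$, which I expect to be the main obstacle. I would establish it through the two-sided ideal structure at $p$: because $E$ is defined over $\field_p$, conjugation by $\pi$ realizes the nontrivial automorphism of $R_E$ coming from $\mathrm{Gal}(\field_{p^2}/\field_p)$, so $P := \pi R_E = R_E\pi$ is the unique two-sided ideal of the maximal order $R_E$ with $\mathrm{Nrd}(P) = p$ and $P^2 = pR_E$. One checks $\bar P = P$ and $P \cap \integer = p\integer$; hence for any $s$ the element $s\pi \in P$ has $\mathrm{tr}(s\pi) = s\pi + \overline{s\pi} \in P \cap \integer = p\integer$. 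This yields $a \in \integer$ and completes the argument.

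Finally I would stress that the hypothesis $E/\field_p$ is used precisely at the production of $\pi$: for a curve defined only over $\field_{p^2}$ the Frobenius is the central scalar $\pm p$ and supplies no trace-zero norm-$p$ vector, so both the construction and the conclusion genuinely require the $\field_p$-rationality.
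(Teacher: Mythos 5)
Your proof is correct, and it takes a genuinely different route from the paper's. The paper argues by explicit computation from Ibukiyama's classification: since $E$ is defined over $\field_p$, the order $R_E$ is one of the two explicit orders $R(q,r)$ or $R'(q,r')$, and in each case the paper writes down a basis of $R_E^0$, defines $L$ by an explicit intermediate basis (e.g.\ $\gamma_1, 2\gamma_2, \gamma_3$ in the first case), and performs a linear change of variables to exhibit the norm form. You instead argue intrinsically: the Frobenius $\pi\in R_E$ with $\pi^2=-p$ supplies the trace-zero norm-$p$ vector, you set $L:=\integer\pi\oplus\bigl(R_E^0\cap\pi^{\perp}\bigr)$, and the containment $L_E\subseteq L$ reduces to the divisibility $p\mid \mathrm{tr}(s\pi)$ for all $s\in R_E$, which you correctly extract from the two-sided ideal $P=\pi R_E=R_E\pi$ satisfying $\bar P = P$ and $P\cap\integer=p\integer$ (the identity $\pi R_E = R_E\pi$ can also be checked purely locally, since at $p$ the maximal order in the division algebra is unique and away from $p$ the element $\pi$ is a unit, if one prefers not to lean on the Galois-action description). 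Your approach is classification-free, conceptually cleaner, and isolates exactly where $\field_p$-rationality enters, whereas the paper's approach buys the explicit data that Section \ref{smallDsection} actually consumes: the algorithm there needs the concrete forms $Q(x',y',z')=q(x')^2+\frac{r^2+p}{q}(y')^2+p(z')^2+2rx'y'$ with the parity condition $z'\equiv y'\pmod 2$ cutting out $L_E$ inside $L$, and $Q'$, in order to precompute $SE_M$, $SO_M$, and $S_M$; to feed the algorithm, your existence proof would still have to be supplemented by computing a basis of $W=R_E^0\cap\pi^{\perp}$ and the index of $L_E$ in $L$. Two minor remarks: your $L$ need not be a \emph{proper} sublattice of $R_E^0$, but neither is the paper's in the $R'(q,r')$ case, so the symbol $\subset$ in the statement is evidently meant loosely; and $\pi^2=-p$ requires trace of Frobenius zero, which holds for every supersingular $E/\field_p$ once $p\geq 5$ --- harmless for the paper's range $p\geq 11$, but worth a word if the lemma were quoted for $p\leq 3$.
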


\begin{proof}
Since $E$ is defined over $\field_p$, Ibukiyama \cite{Ibukiyama1} has shown that $R_E$ is of one of the following two types,
\begin{equation}\label{maxorder1}
R(q,r):=\integer + \integer\frac{1+\beta}{2} + \integer\frac{\alpha(1+\beta)}{2} +\integer \frac{(r+\alpha)\beta}{q}
\end{equation}
or
\begin{equation}\label{maxorder2}
R'(q,r'):=\integer + \integer\frac{1+\alpha}{2} + \integer\beta +\integer \frac{(r'+\alpha)\beta}{2q},
\end{equation}
where $q$ is a prime satisfying $q\equiv 3\pmod{8}$ and $\left(\frac{-q}{p}\right)=-1$, $\alpha^2=-p$, $\beta^2=-q$, $\alpha\beta=-\beta\alpha$, $r^2+p\equiv 0\pmod{q}$ and $r'^2+p\equiv 0\pmod{4q}$ in the case when $p\equiv 3\pmod{4}$. 

For $R_E=R(q,r)$ with basis $\frac{1+\beta}{2}$, $\gamma_1:=\beta$, $\gamma_2:=\frac{\alpha(1+\beta)}{2}$, and $\gamma_3:=\frac{(r+\alpha)\beta}{q}$, $R_E^0$ is generated by $\gamma_1,\gamma_2,\gamma_3$ while $L_E$ is generated by $\gamma_1,2\gamma_2,2\gamma_3$.  We take $L$ to be the lattice generated by $\gamma_1,2\gamma_2,\gamma_3$.  If an arbitrary element of $L$ is written $x\gamma_1+2y\gamma_2+z\gamma_3$, then the change of variables $x':=x-ry$, $y':=z+qy$ and $z':=y$ gives the reduced norm
$$
 p(x')^2 + \frac{r^2+p}{q}(y')^2 +p(z')^2 +2rx'y',
$$
as desired.  Changing $z$ to $2z$ above implies that $z'\equiv y' \pmod{2}$, so that the reduced norm on $L_E$ is precisely the quadratic form given above with $z'\equiv y'\pmod{2}$.

If $R_E= R'(q,r')$, we have a simpler task.  In this case, the reduced norm on $R_E^{0}$ is simply
$$
px^2 + q y^2 + \frac{(r')^2+p}{4q}z^2 + r'yz.
$$
To get the elements of the Gross lattice, we simply multiply $y$ and $z$ by $2$ to get
$$
Q'(x,y,z):=px^2 + (4q) y^2 + \frac{(r')^2+p}{q}z^2 + (4r')yz.
$$
\end{proof}

Given Lemma \ref{Fplemma}, the reduced norm on $L_E$ is either of the form
$$
Q(x',y',z'):=q(x')^2 + \frac{r^2+p}{q}(y')^2 + p(z')^2 +2rx'y',
$$
with $z'\equiv y'\pmod{2}$, or
$$
Q'(x,y,z):=px^2 + (4q) y^2 + \frac{(r')^2+p}{q}z^2 + (4r')yz.
$$

To check if an integer $n$ is represented, we first set two integers $M$ and 
$N$ and do a precomputation for efficiency.  For $Q$, we do a precomputation of the two sets 
$$
SE_{M}:=\{ n\leq M: n=q(x')^2 + \frac{r^2+p}{q}(y')^2+ 2rx'y', y' \text{ even}\},
$$
and analogously
$$
SO_{M}:=\{ n\leq M: n=q(x')^2 + \frac{r^2+p}{q}(y')^2+ 2rx'y', y' \text{ odd}\}.
$$
Since we know that, with $x'$ fixed, the minimum value is obtained at $xdiv:=(q-\frac{rq}{p+r^2})x'^2$, we run $x'$ from $0$ to $\left(\frac{M}{xdiv}\right)^{1/2}$ and then $y'$ from 0 to $\frac{2rx' + \sqrt{4r^2x' - 4\frac{p+r^2}{q}\cdot (q(x')^2-M)}}{2\frac{p+r^2}{q}}$, and simply calculate $n=Q(x',y',0)$.  If $y'$ is odd, we add $n$ to $SO_M$, and if $y'$ is even then we add $n$ to $SE_M$.

Similarly, for $Q'$, we calculate 
$$
S_{M}:=\{ n\leq M: n=Q'(0,y,z)\}.
$$

Given $SE_{M}$ and $SO_{M}$, we now calculate
$$
T_{N,M}:=\{ n\leq N : n=m+p(z')^2, m\in SE_{M}\text{ and $z'$ even, or } m\in SO_{M}\text{ and $z'$ odd}\}.
$$
Notice that, if we define
$$
T_{N}:=\{n\leq N: n=Q(x',y',z'), y'\equiv z'\pmod{2} \},
$$
then $T_{M}\subseteq T_{N,M}\subseteq T_{N}$.  Therefore, for every $n\in T_{N,M}$, we know $n\in T_{N}$, and for every $n\notin T_{N,M}$ with $n\leq M$, we know $n\notin T_{N}$.  Since we expect that after a low bound $M$ we will not have any such eligible elements which are not in $T_N$, we can set $M$ lower for optimization purposes.

We now describe the algorithm to calculate $T_{N,M}$.  For each eligible $|D|\leq N$, we check from $z'=\left(\frac{|D|-M}{p}\right)^{1/2}$ to $z'=\left(\frac{|D|}{p}\right)^{1/2}$.  For each $z'$, if $z'$ is even, then we check if $|D|-p(z')^2\in SE_M$, and if $z'$ is odd, we check if $|D|-p(z')^2\in SO_{M}$.  If so, then we add $|D|$ to $T_{N,M}$.  The algorithm for $Q'$ is entirely analogous, only needing to check membership in $S_{M}$ instead of breaking it up into the even and odd cases.  We know that $np^2\in T_{N,M}$ if and only if $n\in T_{N,M}$, so we can skip checking these cases.

We shall show that the running time for this function is $O(p+NM^{1/2})$.  We need time $O(M)$ to calculate $SE_M$ and $SO_M$.  Calculating the modulus of $p$ which are eligible takes time $O(p)$.  For each $D$, we have to check at most $M^{1/2}$ possible $z'$.  Therefore, since there are $O(N)$ such $D$, this calculation takes $O(NM^{1/2})$.  Thus, the overall running time is $O(M+p+NM^{1/2})=O(p+NM^{1/2})$ (since we will choose $N>p$, we have $O(NM^{1/2})$).  

Notice that for an individual $n\notin T_{N,M}$, we can check membership in $T_N$ in $O(N^{1/2})$ time by calculating checking membership in $SE_{N}$ and $SO_N$ (or $S_N$ for $O'$).  By doing this as a precomputation again, we get a running time of $O(N + N^{1/2}E)$ where $E$ is the number of exceptional $D\notin T_{N,M}$.  Therefore, if we choose $M$ so that $E<(NM)^{1/2}$, then we can calculate $T_N$ in $O(NM^{1/2})$.

\appendix
\section{Data}\label{datasection}
We will now use the algorithm from Section \ref{calcsection}  to compute good bounds for $p\leq 107$, using $\x=455$, $\sigma=1.15$, $N_0=1000$, and $\sigma_2=1.3256$ (These were chosen by a binary search for $\sigma$ and a heuristically based search for $\sigma_2$ given $\sigma$.).  Tables \ref{goodtable1}, \ref{goodtable2}, \ref{goodtable3}, and \ref{goodtable4} will give the good bounds for $E$.  Combining the good bounds for every $E/\field_{p^2}$ we obtain good bounds for $p$ in Table \ref{GoodBoundTable}.  For each maximal order $R_E$, we will list the prime $p$, then the size of the field $\field_q$ ($q=p$ or $q=p^2$) which the corresponding elliptic curve is defined over.  We will then list the corresponding ternary quadratic form as $[a,b,c,d,e,f]=ax^2+by^2+cz^2+dxy+exz+fyz$.  We then list a good bound $D_0$ for $E$ which suffices when $(D,p)=1$, and a good bound $D_1$ which also suffices when $p\mid D$.  We separate these cases since a better bound is obtained for $D$ relatively prime to $p$ and skipping $(D,p)=1$ is a computational gain.  We omit here the primes 3, 5, 7, and 13, since we have $D_p=1$ trivially.  

\begin{table}
\caption{Good Bounds $D_E$ for $E/\field_{p^2}$.}
\begin{tabular}{lccll} 
\toprule
$p$ & $\#\field_q$  &   \textnormal{Quadratic Form} & $D_0$ & $D_1$ \\
\midrule
\midrule
11&  $p$  &    \textnormal{[4,11,12,0,4,0]}   &    $1.813\times 10^8$      &             $3.163\times 10^8$\\
11 & $p$  &    \textnormal{[3,15,15,-2,2,14]} &    $5.142\times 10^8$   &               $8.973\times 10^9$\\
\midrule
17 & $p$    &  \textnormal{[7,11,20,-6,4,8]} &      $1.002\times 10^{10}$    &              $1.748\times 10^{11}$\\
17 & $p$ &      \textnormal{[3,23,23,-2,2,22]} &     $8.652\times 10^{13}$    &        $1.510\times 10^{15}$\\
\midrule
19 & $p$ &      \textnormal{[7,11,23,-2,6,10]} &     $3.020\times 10^{9}$          &       $5.270\times 10^{10}$\\
19 & $p$ &      \textnormal{[4,19,20,0,4,0]} &       $9.198\times 10^{11}$        &      $1.606\times 10^{13}$\\
\midrule
23 & $p$ &      \textnormal{[8,12,23,4,0,0]} &       $7.459\times 10^{10}$         &     $3.700\times 10^{11}$\\
23 & $p$ &      \textnormal{[4,23,24,0,4,0]} &       $2.050\times 10^{14}$      &    $2.522\times 10^{15}$\\
23 & $p$ &      \textnormal{[3,31,31,-2,2,30]} &     $8.297\times 10^{14}$      &   $6.955\times 10^{15}$\\
\midrule
29 & $p$ &      \textnormal{[11,12,32,8,4,12]} &     $6.739\times 10^{11}$         &     $1.008\times 10^{12}$\\
29 & $p$ &      \textnormal{[8,15,31,4,8,2]} &       $3.836\times 10^{13}$       &     $3.130\times 10^{14}$\\
29 & $p$ &      \textnormal{[3,39,39,-2,2,38]} &     $1.900\times 10^{16}$     &     $1.550\times 10^{17}$\\
\midrule
31 & $p$ &      \textnormal{[7,19,36,-6,4,16]} &     $3.836\times 10^{12}$       &    $4.359\times 10^{13}$\\
31 & $p$ &      \textnormal{[8,16,31,4,0,0]} &       $1.245\times 10^{13}$      &     $2.069\times 10^{14}$\\
31 & $p$ &      \textnormal{[4,31,32,0,4,0]} &       $8.558\times 10^{14}$    &     $1.008\times 10^{16}$\\
\midrule
37 & ${p^2}$ &      \textnormal{[15,20,23,-4,14,8]} &    $2.101\times 10^{11}$       &        $3.667\times 10^{12}$\\
37 & $p$ &      \textnormal{[8,19,39,4,8,2]} &       $6.399\times 10^{13}$       &    $1.117\times 10^{15}$\\
\midrule
41 & $p$ &      \textnormal{[11,15,47,-2,10,14]} &   $1.834\times 10^{14}$       &    $3.201\times 10^{15}$\\
41 & $p$ &      \textnormal{[12,15,44,8,12,4]} &     $2.520\times 10^{14}$       &    $7.830\times 10^{15}$\\
41 & $p$ &      \textnormal{[7,24,47,4,2,24]} &      $1.967\times 10^{15}$     &    $1.447\times 10^{16}$\\
41 & $p$ &      \textnormal{[3,55,55,-2,2,54]} &     $4.375\times 10^{17}$   &    $3.579\times 10^{18}$\\
\bottomrule
\end{tabular}
\label{goodtable1}
\end{table}

\begin{table}
\caption{Good Bounds $D_E$ for $E/\field_{p^2}$.}
\begin{tabular}{lccll} 
\toprule
$p$ & $\#\field_q$  &   \textnormal{Quadratic Form} & $D_0$ & $D_1$ \\
\midrule
\midrule
43 & ${p^2}$ &      \textnormal{[15,23,24,2,8,12]} &     $2.565\times 10^{12}$       &       $4.476\times 10^{13}$\\
43 & $p$ &      \textnormal{[11,16,47,4,2,16]} &     $4.056\times 10^{13}$        &   $7.079\times 10^{14}$\\
43 & $p$ &      \textnormal{[4,43,44,0,4,0]} &       $1.364\times 10^{16}$       &   $2.379\times 10^{17}$\\
\midrule
47 & $p$ &      \textnormal{[12,16,47,4,0,0]} &      $1.492\times 10^{14}$       &   $2.604\times 10^{15}$\\
47 & $p$ &      \textnormal{[7,27,55,-2,6,26]} &       $1.080\times 10^{15}$      &   $1.527\times 10^{16}$\\
47 & $p$ &      \textnormal{[8,24,47,4,0,0]}&     $1.056\times 10^{15}$     &   $1.842\times 10^{16}$\\
47 & $p$ &      \textnormal{[4,47,48,0,4,0]} &       $2.339\times 10^{17}$    &   $4.082\times 10^{18}$\\
47 & $p$ &      \textnormal{[3,63,63,-2,2,62]} &     $3.702\times 10^{17}$   &   $6.461\times 10^{18}$\\
\midrule
53 & ${p^2}$ &      \textnormal{[20,23,32,-12,4,20]} &    $1.174\times 10^{14}$   &       $1.428\times 10^{15}$\\
53 & $p$ &      \textnormal{[12,19,56,8,12,4]} &     $4.015\times 10^{15}$     &   $6.101\times 10^{16}$\\
53 & $p$ &      \textnormal{[8,27,55,4,8,2]} &       $5.825\times 10^{16}$    &   $4.883\times 10^{17}$\\
53 & $p$ &      \textnormal{[3,71,71,-2,2,70]} &     $6.918\times 10^{18}$  &   $5.467\times 10^{19}$\\
\midrule
59 & $p$ &      [15,16,63,4,2,16] &     $6.695\times 10^{13}$       &   $7.662\times 10^{14}$\\
59 & $p$ &      [15,19,64,-14,8,12] &   $6.695\times 10^{13}$       &   $7.662\times 10^{14}$\\
59 & $p$ &      [7,35,68,-6,4,32] &     $4.612\times 10^{14}$      &   $2.426\times 10^{15}$\\
59 & $p$ &      [12,20,59,4,0,0] &      $2.811\times 10^{15}$     &   $4.492\times 10^{16}$\\
59 & $p$ &      [4,59,60,0,4,0] &       $1.106\times 10^{17}$    &   $1.174\times 10^{18}$\\
59 & $p$ &      [3,79,79,-2,2,78] &     $7.295\times 10^{17}$   &   $1.166\times 10^{19}$\\
\midrule
61 & ${p^2}$ &      \textnormal{[23,24,32,16,4,12]} &    $8.254\times 10^{14}$   &      $6.927\times 10^{15}$\\
61 & $p$ &      \textnormal{[7,35,71,-2,6,34]} &     $5.007\times 10^{15}$       &  $2.545\times 10^{16}$\\
61 & $p$ &      \textnormal{[8,31,63,4,8,2]} &       $5.892\times 10^{16}$      &  $2.803\times 10^{17}$\\
61 & $p$ &      \textnormal{[11,23,68,-6,8,20]} &    $5.240\times 10^{16}$     &  $3.797\times 10^{17}$\\
\midrule
67 & ${p^2}$ &      \textnormal{[23,24,35,8,2,12]} &     $5.517\times 10^{14}$   &      $9.628\times 10^{15}$\\
67 & ${p^2}$ &      \textnormal{[15,36,39,-4,14,16]} &   $1.105\times 10^{15}$   &      $1.928\times 10^{16}$\\
67 & $p$ &      \textnormal{[16,19,71,12,16,6]} &    $1.207\times 10^{16}$      &  $1.987\times 10^{17}$\\
67 & $p$ &     \textnormal{[4,67,68,0,4,0]} &       $2.623\times 10^{18}$   &  $2.642\times 10^{19}$\\
\midrule
71 & $p$ &      \textnormal{[15,20,76,8,4,20]} &     $7.313\times 10^{16}$    &  $5.485\times 10^{17}$\\
71 & $p$ &      \textnormal{[12,24,71,4,0,0]} &      $3.235\times 10^{16}$   &  $2.936\times 10^{17}$\\
71 & $p$ &      \textnormal{[15,19,79,-2,14,18]} &   $1.343\times 10^{17}$    &  $5.962\times 10^{17}$\\
71 & $p$ &      \textnormal{[16,20,71,12,0,0]} &     $1.693\times 10^{17}$    &  $1.667\times 10^{18}$\\
71 & $p$ &      \textnormal{[8,36,71,4,0,0]} &       $1.450\times 10^{17}$   &  $2.531\times 10^{18}$\\
71 & $p$ &      \textnormal{[4,71,72,0,4,0]} &       $2.876\times 10^{19}$  &  $1.379\times 10^{20}$\\
71 & $p$ &      \textnormal{[3,95,95,-2,2,94]} &     $2.725\times 10^{19}$  &  $2.191\times 10^{20}$\\
\bottomrule
\end{tabular}
\label{goodtable2}
\end{table}

\begin{table}
\caption{Good Bounds $D_E$ for $E/\field_{p^2}$.}
\begin{tabular}{lccll} 
\toprule
$p$ & $\#\field_q$  &   Quadratic Form & $D_0$ & $D_1$ \\
\midrule
\midrule
73 & ${p^2}$ &      \textnormal{[15,39,40,2,8,20]} &     $8.979\times 10^{15}$    &     $1.147\times 10^{17}$\\
73 & ${p^2}$ &      \textnormal{[20,31,44,-12,4,28]} &   $6.740\times 10^{16}$   &     $4.422\times 10^{17}$\\
73 & $p$ &      \textnormal{[7,43,84,-6,4,40]} &     $1.025\times 10^{17}$      &  $5.334\times 10^{17}$\\
73 & $p$ &      \textnormal{[11,28,80,8,4,28]} &     $1.767\times 10^{18}$     &  $1.452\times 10^{19}$\\
\midrule
79 & ${p^2}$ &  \textnormal{[23,31,44,18,16,20]} &   $2.150\times 10^{15}$  &      $2.481\times 10^{16}$\\
79 & $p$ &      \textnormal{[16,20,79,4,0,0]} &      $2.923\times 10^{16}$     &  $3.458\times 10^{17}$\\
79 & $p$ &      \textnormal{[19,20,84,16,8,20]} &    $5.009\times 10^{16}$    &  $8.741\times 10^{17}$\\
79 & $p$ &      \textnormal{[11,31,87,-10,6,26]} &   $1.112\times 10^{17}$    &  $1.305\times 10^{18}$\\
79 & $p$ &      \textnormal{[8,40,79,4,0,0]} &       $1.169\times 10^{17}$    &  $1.503\times 10^{18}$\\
79 & $p$ &      \textnormal{[4,79,80,0,4,0]} &       $6.499\times 10^{18}$   &  $1.121\times 10^{20}$\\
\midrule
83 & ${p^2}$ &      [23,31,44,-14,8,12] &   $4.054\times 10^{15}$   &     $6.477\times 10^{16}$\\
83 & $p$ &      [12,28,83,4,0,0] &      $1.721\times 10^{16}$      & $2.591\times 10^{17}$\\
83 & $p$ &      [7,48,95,4,2,48] &      $3.913\times 10^{16}$      & $6.251\times 10^{17}$\\
83 & $p$ &      [16,23,87,12,16,6] &    $8.775\times 10^{16}$      & $1.328\times 10^{18}$\\
83 & $p$ &      [11,31,92,-6,8,28] &    $1.574\times 10^{16}$      & $2.514\times 10^{18}$\\
83 & $p$ &      [3,111,111,-2,2,110] &  $4.776\times 10^{18}$    & $7.089\times 10^{19}$\\
83 & $p$ &      [4,83,84,0,4,0] &       $6.461\times 10^{18}$    & $1.033\times 10^{20}$\\
\midrule
89 & ${p^2}$ &  \textnormal{[23,31,48,2,12,16]} &    $3.896\times 10^{17}$  &    $4.145\times 10^{18}$\\
89 & $p$ &      \textnormal{[19,23,95,-18,10,14]} &  $1.236\times 10^{19}$  &   $2.906\times 10^{19}$\\
89 & $p$ &      \textnormal{[15,27,96,-14,8,20]} &   $2.636\times 10^{19}$  &   $5.543\times 10^{19}$\\
89 & $p$ &      \textnormal{[12,31,92,8,12,4]} &     $4.535\times 10^{19}$ & $1.108\times 10^{20}$\\
89 & $p$ &      \textnormal{[15,24,95,4,2,24]} &     $1.052\times 10^{20}$  &   $1.811\times 10^{20}$\\
89 & $p$ &      \textnormal{[7,51,103,-2,6,50]} &    $2.994\times 10^{20}$ &   $3.541\times 10^{20}$\\
89 & $p$ &      \textnormal{[3,119,119,-2,2,118]} &  $1.017\times 10^{22}$&  $6.887\times 10^{22}$\\
\midrule
97 & ${p^2}$ &      \textnormal{[23,39,51,-22,6,14]} &   $1.241\times 10^{17}$  &     $6.289\times 10^{17}$\\
97 & ${p^2}$ &      \textnormal{[15,52,55,-4,14,24]} &   $4.517\times 10^{17}$   &     $2.630\times 10^{18}$\\
97 & $p$ &      \textnormal{[7,56,111,4,2,56]} &     $2.204\times 10^{18}$    &   $4.357\times 10^{18}$\\
97 & ${p^2}$ &      \textnormal{[20,39,59,-4,8,38]} &    $5.923\times 10^{18}$  &     $1.541\times 10^{19}$\\
97 & $p$ &      \textnormal{[19,23,104,-14,12,16]} & $2.188\times 10^{19}$   &   $7.815\times 10^{19}$\\
\midrule
101 & ${p^2}$ &     [32,39,44,-12,28,20] &  $8.477\times 10^{15}$ &       $3.603\times 10^{16}$\\
101 & $p$ &     [12,35,104,8,12,4] &    $1.709\times 10^{17}$   &   $1.223\times 10^{18}$\\
101 & $p$ &     [15,28,108,8,4,28] &    $1.572\times 10^{18}$   &   $3.193\times 10^{18}$\\
101 & $p$ &     [15,27,111,-2,14,26] &  $5.261\times 10^{17}$   &   $3.388\times 10^{18}$\\
101 & $p$ &     [8,51,103,4,8,2] &      $2.948\times 10^{18}$  &   $7.940\times 10^{18}$\\
101 & $p$ &     [7,59,116,-6,4,56] &    $2.341\times 10^{18}$  &   $1.015\times 10^{19}$\\
\bottomrule
\end{tabular}
\label{goodtable3}
\end{table}

\begin{table}
\caption{Good Bounds $D_E$ for $E/\field_{p^2}$.}
\begin{tabular}{lccll}
\toprule
$p$ & $\#\field_q$  &   Quadratic Form & $D_0$ & $D_1$ \\
\midrule
\midrule
101 & $p$ &     [11,39,111,-10,6,34] &  $4.559\times 10^{18}$  &   $2.415\times 10^{19}$\\
101 & $p$ &     [3,135,135,-2,2,134] &  $9.667\times 10^{19}$ &   $5.296\times 10^{20}$\\
\midrule
103 & ${p^2}$ &     [23,36,59,-4,22,16] &   $1.076\times 10^{16}$  &      $1.620\times 10^{16}$\\
103 & $p$ &     [16,28,103,12,0,0] &    $9.459\times 10^{15}$      &  $4.236\times 10^{16}$\\
103 & ${p^2}$ &     [15,55,56,2,8,28] &     $4.016\times 10^{16}$ &      $5.313\times 10^{16}$\\
103 & $p$ &     [19,23,111,-10,14,18] & $1.645\times 10^{17}$    &  $5.558\times 10^{17}$\\
103 & $p$ &     [7,59,119,-2,6,58] &    $1.765\times 10^{17}$    &  $1.861\times 10^{18}$\\
103 & $p$ &     [8,52,103,4,0,0] &      $1.032\times 10^{18}$    &  $2.160\times 10^{18}$\\
103 & $p$ &     [4,103,104,0,4,0] &     $2.647\times 10^{19}$  &  $8.748\times 10^{19}$\\
\midrule
107 & ${p^2}$ &   [35,39,44,-18,32,4] &     $1.769\times 10^{16}$  &  $9.442\times 10^{16}$\\
107 & ${p^2}$ &   [23,40,56,16,4,20] &     $1.352\times 10^{16}$  &  $2.102\times 10^{17}$\\
107 & ${p}$ &   [16,27,111,-4,16,2] &     $7.861\times 10^{16}$  &  $1.256\times 10^{18}$\\
107 & ${p}$ &   [12,36,107,4,0,0] &     $1.061\times 10^{17}$  &  $1.694\times 10^{18}$\\
107 & ${p}$ &   [19,23,116,-6,16,20] &     $9.625\times 10^{17}$  &  $5.827\times 10^{18}$\\
107 & ${p}$ &   [11,39,119,-2,10,38] &     $1.105\times 10^{18}$  &  $1.732\times 10^{19}$\\
107 & ${p}$ &   [4,107,108,0,4,0] &     $4.853\times 10^{19}$  &  $4.368\times 10^{20}$\\
107 & ${p}$ &   [3,143,143,-2,2,142] &     $1.102\times 10^{20}$  &  $1.761\times 10^{21}$\\
\midrule
109 & ${p^2}$ &   \textnormal{[32,44,47,20,28,36]} &     $4.420\times 10^{16}$  &  $7.714\times 10^{17}$\\
109 & ${p^2}$ &   \textnormal{[23,39,59,10,14,22]} &     $5.539\times 10^{16}$  &  $9.666\times 10^{17}$\\
109 & ${p}$ &   \textnormal{[8,55,111,4,8,2]} &     $3.843\times 10^{17}$  &  $5.604\times 10^{18}$\\
109 & ${p^2}$ &   \textnormal{[24,39,56,16,12,4]} &     $8.005\times 10^{18}$  &  $1.397\times 10^{20}$\\
109 & ${p}$ &   \textnormal{[11,40,119,4,2,40]} &     $4.199\times 10^{19}$  &  $7.329\times 10^{20}$\\
109 & ${p}$ &   \textnormal{[19,23,119,-2,18,22]} &     $1.341\times 10^{20}$  &  $1.841\times 10^{21}$\\
\midrule
113 & ${p^2}$ &   \textnormal{[35,39,47,-6,34,10]} &     $1.141\times 10^{18}$  &  $1.133\times 10^{19}$\\
113 & ${p^2}$ &   \textnormal{[23,40,59,8,2,20]} &     $2.158\times 10^{18}$  &  $2.062\times 10^{19}$\\
113 & ${p^2}$ &   \textnormal{[20,47,68,-12,4,44]} &     $4.539\times 10^{18}$  &  $3.297\times 10^{19}$\\
113 & ${p}$ &   \textnormal{[23,24,119,20,10,24]} &     $3.219\times 10^{19}$  &  $1.853\times 10^{20}$\\
113 & ${p}$ &   \textnormal{[19,24,119,4,2,24]} &     $1.649\times 10^{19}$  &  $2.877\times 10^{20}$\\
113 & ${p}$ &   \textnormal{[12,39,116,8,12,4]} &     $1.610\times 10^{19}$  &  $1.029\times 10^{20}$\\
113 & ${p}$ &   \textnormal{[3,151,151,-2,2,150]} &     $1.105\times 10^{22}$  &  $1.041\times 10^{23}$\\
\bottomrule
\end{tabular}
\label{goodtable4}
\end{table}
For $N\in\nature\cup\{\infty\}$, let $\exc_E^N$ be the set of positive integers $n\leq N$ with $p^2\nmid n$ not represented by $Q_E$.  We omit those $n$ with $p^2\mid n$ since $p$ is an anisotropic prime and hence $n$ is represented if and only if $\frac{n}{p^2}$ is represented.  In Tables \ref{exctable1}, \ref{exctable2}, \ref{exctable3}, \ref{exctable4}, \ref{exctable5}, and \ref{exctable6} we will list $\exc_E^N$ computed using the method described in section \ref{smallDsection} when $E$ is defined over $\field_p$ and otherwise using the standard method \cite{FinckePohst1}.  For each elliptic curve we have chosen $N_0$ and $N_1$ and compute $\exc_E^{N_0}$ and $\{ n\in \exc_E^{N_1}|  n \equiv 0\pmod{p}\}$.  When $\#\exc_E^N$ is small we will list the full set $\exc_E^N$, while we will otherwise simply list $\#\exc_E^N$ and $\max (\exc_E^N)$.  Although we are only able to determine $\exc_p$ for $p=11,17,19$ under GRH, we are able to determine $\exc_E:=\exc_E^{\infty}$ for a number of forms, which we will denote by an asterisk next to the form.

\begin{table}
\caption{The set of exceptions $\exc_E^N$.}
\begin{tabular}{llcl}
\toprule
$p$   & Quadratic Form & $N_0/N_1$    & $\exc_E$ or ($\#\exc_E$ and $\max(\exc_E)$)\\
\hline
\midrule
11  & [4,11,12,0,4,0]$^*$       & $3\times 10^9$  & 3, 67, 235, 427\\
\bottomrule
\end{tabular}
\label{exctable1}
\end{table}

\begin{table}
\caption{The set of exceptions $\exc_E^N$.}
\begin{tabular}{llcl}
\toprule
$p$   & Quadratic Form & $N_0/N_1$    & $\exc_E^{N}$ or ($\# \exc_E^{N}$ and $\max(\exc_E^N)$)\\
\midrule
\midrule
11  & [3,15,15,-2,2,14]$^*$     & $10^{10}$  & 4, 11, 88, 91, 163, 187, 232, 499, 595, 627, 715,\\
& & &   907, 1387, 1411, 3003, 3355, 4411, 5107,\\
& & &   6787, 10483, 11803\\
\midrule
17 &  [7,11,20,-6,4,8]$^*$ &    $2\times 10^{11}$ &  3, 187, 643\\
\midrule
17 &  [3,23,23,-2,2,22]$^*$ &    $9\times 10^{13}$ /  &      $\# \exc_E^N=88$ $\max(\exc_E^N)=89563$\\
& &  $2\times 10^{15}$ &\\
\midrule
19 &  [7,11,23,-2,6,10]$^*$ & $10^{11}$ &  4, 19, 163, 760, 1051\\
\midrule
19 &      [4,19,20,0,4,0]$^*$ &    $10^{12}$/  & 7, 11, 24, 43, 115, 123, 139, 228, 232, 267, \\
& &$2\times 10^{13}$ &  403, 424, 435, 499, 520, 568, 627, 643, 691, \\
& & &  883, 1099, 1411, 1659, 1672, 1867, 2139, \\
& & &   2251, 2356, 2851, 3427, 4123, 5131, 5419, \\
& & &    5707,  6619, 7723, 8968, 12331, 22843, 27955\\
\midrule
23 &  [8,12,23,4,0,0]$^*$ &    $4\times 10^{11}$ & 3,4,27, 115, 123,163,403,427, 443, 667,\\
& & &  1467, 2787, 3523\\
\midrule
23 &  [4,23,24,0,4,0] &    $3\times 10^9$ & $\#\exc_E^N =78$, $\max(\exc_E^N)=72427$\\
\midrule
23 &  [3,31,31,-2,2,30] &   $3\times 10^9$ & $\#\exc_E^N=196$, $\max(\exc_E^N)=286603$\\
\midrule
29 & [11,12,32,8,4,12]$^*$ &  $7\times 10^{11}$ /   &     $\#\exc_E^N=24$, $\max(\exc_E^N)=22243$\\
 & & $2\times 10^{12}$ & \\
\midrule
29 & [8,15,31,4,8,2]$^*$ &    $4\times 10^{13}$/  &     $\#\exc_E^N=23$, $\max(\exc_E^N)=7987$\\
& & $4\times 10^{14}$  &\\
\midrule
29 & [3,39,39,-2,2,38] &  $10^{9}$   &     $\#\exc_E^N=382$, $\max(\exc_E^N)=1107307$\\
\midrule
31 &  [8,16,31,4,0,0]$^*$ &  $4\times 10^{12}$/      &     $\#\exc_E^N=36$, $\max(\exc_E^N)=17515$\\
& & $5\times 10^{13}$ & \\
\midrule
31 &  [7,19,36,-6,4,16]$^*$ & $2\times 10^{13}$/       &   $\#\exc_E^N=29$, $\max(\exc_E^N)=15283$\\
& & $3\times 10^{14}$ & \\
\midrule
31 &  [4,31,32,0,4,0]  & $10^{11}$    &  $\#\exc_E^N=166$, $\max(\exc_E^N)=174003$\\
\midrule
37 &  [15,20,23,-4,14,8] & $10^{9}$     &  8,19,43,163,427,723,2923,3907\\ 
\midrule
37 & [8,19,39,4,8,2]$^*$    & $6.5\times 10^{13}$/ & $\#\exc_E^N=55$, $\max(\exc_E^N)=24952$\\
& & $2\times 10^{15}$ & \\
\midrule
41 & [12,15,44,8,12,4] &    $10^{10}$   &    $\#\exc_E^N=60$, $\max(\exc_E^N)=82123$\\
\bottomrule
\end{tabular}
\label{exctable2}
\end{table}

\begin{table}
\caption{The set of exceptions $\exc_E^N$.}
\begin{tabular}{llcl}
\toprule
$p$   & Quadratic Form & $N_0/N_1$    & $\exc_E^{N}$ or ($\# \exc_E^{N}$ and $\max(\exc_E^N)$)\\
\midrule
\midrule
41 & [11,15,47,-2,10,14] &   $10^{10}$  &    $\#\exc_E^N=65$, $\max(\exc_E^N)=48547$\\
\midrule
41 & [7,24,47,4,2,24] &    $3\times 10^{9}$  &   $\#\exc_E^N=82$, $\max(\exc_E^N)=83107$\\
\midrule
41 & [3,55,55,-2,2,54] &    $10^{10}$  &  $\#\exc_E^N=896$, $\max(\exc_E^N)=5017867$\\
\midrule
43 & [15,23,24,2,8,12] &     $3.6\times 10^{10}$       &       4, 11, 16, 52, 67, 187, 379, 403, 568, 883,\\
& & &  1012, 2347, 2451\\
\midrule
43 & [11,16,47,4,2,16]$^*$ & $4.5\times 10^{13}$/ & $\#\exc_E^N=81$, $\max(\exc_E^N)=73315$\\
& & $8\times 10^{14}$ & \\
\midrule
43 &      [4,43,44,0,4,0] &   $10^{9}$  & $\#\exc_E^N=439$, $\max(\exc_E^N)=1079467$\\
\midrule
47 & [12,16,47,4,0,0]  &  $10^{9}$  &   $\#\exc_E^N=106$, $\max(\exc_E^N)=272083$\\
\midrule
47 & [8,24,47,4,0,0]   &  $10^{9}$  &   $\#\exc_E^N=108$, $\max(\exc_E^N)=85963$\\
\midrule
47 & [7,27,55,-2,6,26] &  $10^{9}$  &   $\#\exc_E^N=112$, $\max(\exc_E^N)=78772$\\
\midrule
47 & [4,47,48,0,4,0]   &  $2\times 10^{9}$ & $\#\exc_E^N=556$, $\max(\exc_E^N)=5345827$\\
\midrule
47 & [3,63,63,-2,2,62] &  $10^{9}$  &   $\#\exc_E^N=1165$, $\max(\exc_E^N)=4812283$\\
\midrule
53 & [20,23,32,-12,4,20] & $10^{9}$ & $\#\exc_E^N=30$, $\max(\exc_E^N)=33147$\\
\midrule
53 & [12,19,56,8,12,4]  &  $10^{9}$ &  $\#\exc_E^N=138$, $\max(\exc_E^N)=178027$\\
\midrule
53 & [8,27,55,4,8,2]    &  $10^{9}$ &  $\#\exc_E^N=152$, $\max(\exc_E^N)=137323$\\
\midrule
53 & [3,71,71,-2,2,70]  &  $10^{9}$ &  $\#\exc_E^N=1604$, $\max(\exc_E^N)=6474427$\\
\midrule
59 & [15,16,63,4,2,16] &   $2\times 10^{9}$ & $\#\exc_E^{N}=158$, $\max(\exc_E^N)=304027$\\
\midrule
59 & [15,19,64,-14,8,12] & $2\times 10^{9}$ & $\#\exc_E^{N}=174$, $\max(\exc_E^N)=318091$\\
\midrule
59 & [7,35,68,-6,4,32] &  $2\times 10^{9}$  & $\#\exc_E^{N}=228$, $\max(\exc_E^N)=132883$\\
\midrule
59 & [12,20,59,4,0,0] &   $2\times 10^{9}$  & $\#\exc_E^{N}=193$, $\max(\exc_E^N)=316747$\\
\midrule
59 & [4,59,60,0,4,0] &    $2\times 10^{9}$  & $\#\exc_E^{N}=920$, $\max(\exc_E^N)=3136219$\\
\midrule
59 & [3,79,79,-2,2,78] &  $2\times 10^{9}$  & $\#\exc_E^{N}=2072$, $\max(\exc_E^N)=8447443$\\
\midrule
61 & [23,24,32,16,4,12] & $2\times 10^{9}$ & $\#\exc_E^{N}=43$, $\max(\exc_E^N)=11923$\\
\midrule
61 &  [7,35,71,-2,6,34] & $2\times 10^{9}$ &  $\#\exc_E^{N}=271$, $\max(\exc_E^N)=1096867$\\
\midrule
61 & [8,31,63,4,8,2] &    $2\times 10^{9}$ &  $\#\exc_E^{N}=233$, $\max(\exc_E^N)=363987$\\
\midrule
61 & [11,23,68,-6,8,20] &  $2\times 10^{9}$ & $\#\exc_E^{N}=201$, $\max(\exc_E^N)=190747$\\
\bottomrule
\end{tabular}
\label{exctable3}
\end{table}

\begin{table}
\caption{The set of exceptions $\exc_E^N$.}
\begin{tabular}{llcl}
\toprule
$p$   & Quadratic Form & $N_0/N_1$    & $\exc_E^{N}$ or ($\# \exc_E^{N}$ and $\max(\exc_E^N)$)\\
\midrule
\midrule
67 & [15,36,39,-4,14,16] &  $10^{9}$   & $\#\exc_E^{N}=57$, $\max(\exc_E^N)=20707$\\
\midrule
67 & [23,24,35,8,2,12] &   $10^{9}$    &   $\#\exc_E^{N}=59$, $\max(\exc_E^N)=126043$\\
\midrule
67 & [16,19,71,12,16,6] &    $2\times 10^{9}$ & $\#\exc_E^{N}=264$, $\max(\exc_E^N)=421579$\\
\midrule
67 & [4,67,68,0,4,0] &  $10^{9}$ & $\#\exc_E^{N}=1271$, $\max(\exc_E^N)=3846403$\\
\midrule
71 & [15,20,76,8,4,20] &  $2\times 10^{9}$  & $\#\exc_E^{N}=275$, $\max(\exc_E^N)=321883$\\
\midrule
71 & [15,19,79,-2,14,18] & $2\times 10^{9}$  & $\#\exc_E^{N}=273$, $\max(\exc_E^N)=267883$\\
\midrule
71 & [16,20,71,12,0,0] &   $2\times 10^{9}$  & $\#\exc_E^{N}=310$, $\max(\exc_E^N)=1540771$\\
\midrule
71 & [12,24,71,4,0,0] & $2\times 10^{9}$  & $\#\exc_E^{N}=307$, $\max(\exc_E^N)=635947$\\
\midrule
71 & [8,36,71,4,0,0] &  $2\times 10^{9}$  &   $\#\exc_E^{N}=346$, $\max(\exc_E^N)=1053427$\\
\midrule
71 & [4,71,72,0,4,0] &  $2\times 10^{9}$  &  $\#\exc_E^{N}=1450$, $\max(\exc_E^N)=6463627$\\
\midrule
71 & [3,95,95,-2,2,94] & $2\times 10^{9}$ & $\#\exc_E^{N}=3170$, $\max(\exc_E^N)=15135283$\\
\midrule
73 & [15,39,40,2,8,20] & $10^{9}$ & $\#\exc_E^{N}=81$, $\max(\exc_E^N)=53188$\\
\midrule
73 & [20,31,44,-12,4,28] & $10^{9}$  &  $\#\exc_E^{N}=72$, $\max(\exc_E^N)=111763$\\
\midrule
73 & [7,43,84,-6,4,40] &  $2\times 10^{9}$   & $\#\exc_E^{N}=420$, $\max(\exc_E^N)=364708$\\
\midrule
73 & [11,28,80,8,4,28] &  $2\times 10^{9}$   & $\#\exc_E^{N}=336$, $\max(\exc_E^N)=723795$\\
\midrule
79 & [23,31,44,18,16,20] &  $10^{9}$ & $\#\exc_E^{N}=88$, $\max(\exc_E^N)=50955$\\
\midrule
79 & [16,20,79,4,0,0] &  $2\times 10^{9}$  & $\#\exc_E^{N}=383$, $\max(\exc_E^N)=1419867$\\
\midrule
79 & [19,20,84,16,8,20] & $2\times 10^{9}$ & $\#\exc_E^{N}=391$, $\max(\exc_E^N)=1210675$\\
\midrule
79 & [11,31,87,-10,6,26] & $2\times 10^{9}$ & $\#\exc_E^{N}=409$, $\max(\exc_E^N)=12778803$\\
\midrule
79 & [8,40,79,4,0,0] & $2\times 10^{9}$    & $\#\exc_E^{N}=495$, $\max(\exc_E^N)=1116507$\\
\midrule
79 & [4,79,80,0,4,0] & $2\times 10^{9}$    & $\#\exc_E^{N}=1886$, $\max(\exc_E^N)=25575460$\\
\midrule
83 & [23,31,44,-14,8,12] & $10^{9}$    & $\#\exc_E^{N}=97$, $\max(\exc_E^N)=36763$\\
\midrule
83 & [12,28,83,4,0,0] & $2\times 10^{9}$    & $\#\exc_E^{N}=432$, $\max(\exc_E^N)=635347$\\
\midrule
83 & [7,48,95,4,2,48] & $2\times 10^{9}$    & $\#\exc_E^{N}=529$, $\max(\exc_E^N)=1358107$\\
\midrule
83 & [16,23,87,12,16,6] & $2\times 10^{9}$  & $\#\exc_E^{N}=416$, $\max(\exc_E^N)=1202587$\\
\midrule
83 & [11,31,92,-6,8,28] & $2\times 10^{9}$ & $\#\exc_E^{N}=469$, $\max(\exc_E^N)=1381867$\\
\bottomrule
\end{tabular}
\label{exctable4}
\end{table}

\begin{table}
\caption{The set of exceptions $\exc_E^N$.}
\begin{tabular}{llcl}
\toprule
$p$   & Quadratic Form & $N_0/N_1$    & $\exc_E^{N}$ or ($\# \exc_E^{N}$ and $\max(\exc_E^N)$)\\
\hline
\midrule
83 & [3,111,111,-2,2,110] & $2\times 10^{9}$ & $\#\exc_E^{N}=4639$, $\max(\exc_E^N)=62337067$\\
\midrule
83 & [4,83,84,0,4,0] &  $2\times 10^{9}$ & $\#\exc_E^{N}=2134$, $\max(\exc_E^N)=9405643$\\
\midrule
89 & [23,31,48,2,12,16] &  $10^{9}$ &    $\#\exc_E^{N}=118$, $\max(\exc_E^N)=137707$\\
\midrule
89 & [15,24,95,4,2,24] &     $5\times 10^8$ & $\#\exc_E^{N}=502$, $\max(\exc_E^N)=682147$\\
\midrule
89 & [15,27,96,-14,8,20] &   $5\times 10^8$ & $\#\exc_E^{N}=464$, $\max(\exc_E^N)=1534723$\\
\midrule
89 & [19,23,95,-18,10,14] &  $5\times 10^8$ & $\#\exc_E^{N}=540$, $\max(\exc_E^N)=981403$\\
\midrule
89 & [7,51,103,-2,6,50] &    $5\times 10^8$ & $\#\exc_E^{N}=646$, $\max(\exc_E^N)=1427827$\\
\midrule
89 & [3,119,119,-2,2,118] &  $2\times 10^9$ & $\#\exc_E^{N}=5357$, $\max(\exc_E^N)=28654707$\\
\midrule
89 & [12,31,92,8,12,4] &    $5\times 10^8$  & $\#\exc_E^{N}=478$, $\max(\exc_E^N)=653227$\\
\midrule
97 & [23,39,51,-22,6,14] &    $10^9$  & $\#\exc_E^{N}=283$, $\max(\exc_E^N)=74011$\\
\midrule
97 & [15,52,55,-4,14,24] &    $10^9$  & $\#\exc_E^{N}=295$, $\max(\exc_E^N)=94963$\\
\midrule
97 & [7,56,111,4,2,56] &    $10^9$  & $\#\exc_E^{N}=814$, $\max(\exc_E^N)=851272$\\
\midrule
97 & [20,39,59,-4,8,38] &    $10^9$  & $\#\exc_E^{N}=277$, $\max(\exc_E^N)=118243$\\
\midrule
97 & [19,23,104,-14,12,16] &    $10^9$  & $\#\exc_E^{N}=636$, $\max(\exc_E^N)=1336483$\\
\midrule
101 & [32,39,44,-12,28,20] & $10^9$ & $\#\exc_E^{N}=158$, $\max(\exc_E^N)=123523$\\
\midrule
101 & [12,35,104,8,12,4] & $10^9$ & $\#\exc_E^{N}=652$, $\max(\exc_E^N)=1157083$\\
\midrule
101 & [15,28,108,8,4,28] & $10^9$ & $\#\exc_E^{N}=625$, $\max(\exc_E^N)=1299163$\\
\midrule
101 & [15,27,111,-2,14,26] & $10^9$ & $\#\exc_E^{N}=652$, $\max(\exc_E^N)=901363$\\
\midrule
101 & [7,59,116,-6,4,56] & $10^9$ & $\#\exc_E^{N}=881$, $\max(\exc_E^N)=1720048$\\
\midrule
101 & [11,39,111,-10,6,34] & $10^9$ & $\#\exc_E^{N}=723$, $\max(\exc_E^N)=1305627$\\
\midrule
101 & [3,135,135,-2,2,134] & $10^9$ & $\#\exc_E^{N}=7304$, $\max(\exc_E^N)=24487147$\\
\midrule
103 & [23,36,59,-4,22,16] & $10^9$ & $\#\exc_E^{N}=174$, $\max(\exc_E^N)=121027$\\
\midrule
103 & [16,28,103,-12,0,0] & $10^9$ & $\#\exc_E^{N}=696$, $\max(\exc_E^N)=1004347$\\
\midrule
103 & [15,55,56,2,8,28] & $10^9$ & $\#\exc_E^{N}=200$, $\max(\exc_E^N)=353728$\\
\bottomrule
\end{tabular}
\label{exctable5}
\end{table}

\begin{table}
\caption{The set of exceptions $\exc_E^N$.}
\begin{tabular}{llcl}
\toprule
$p$   & Quadratic Form & $N_0/N_1$    & $\exc_E^{N}$ or ($\# \exc_E^{N}$ and $\max(\exc_E^N)$)\\
\hline
\midrule
103 & [19,23,111,-10,14,18] & $10^9$ & $\#\exc_E^{N}=709$, $\max(\exc_E^N)=1086547$\\
\midrule
103 & [8,52,103,4,0,0] & $10^9$ & $\#\exc_E^{N}=896$, $\max(\exc_E^N)=1019467$\\
\midrule
103 & [7,59,119,-2,6,58] & $10^9$ & $\#\exc_E^{N}=903$, $\max(\exc_E^N)=1959163$\\
\midrule
103 & [4,103,104,0,4,0] & $10^9$ & $\#\exc_E^{N}=2358$, $\max(\exc_E^N)=6390532$\\
\midrule
107 &   [35,39,44,-18,32,4] & $10^9$ & $\#\exc_E^{N}=186$, $\max(\exc_E^N)=169467$\\
\midrule
107 &   [23,40,56,16,4,20] & $10^9$ & $\#\exc_E^{N}=209$, $\max(\exc_E^N)=274387$\\
\midrule
107 &   [16,27,111,-4,16,2] & $10^9$ & $\#\exc_E^{N}=769$, $\max(\exc_E^N)=2998675$\\
\midrule
107 &   [12,36,107,4,0,0] & $10^9$ & $\#\exc_E^{N}=817$, $\max(\exc_E^N)=695179$\\
\midrule
107  &   [19,23,116,-6,16,20] & $10^9$ & $\#\exc_E^{N}=813$, $\max(\exc_E^N)=3142483$\\
\midrule
107 &   [11,39,119,-2,10,38] & $10^9$ & $\#\exc_E^{N}=856$, $\max(\exc_E^N)=838987$\\
\midrule
107 &   [4,107,108,0,4,0] & $10^9$ & $\#\exc_E^{N}=3873$, $\max(\exc_E^N)=13204228$\\
\midrule
107 &   [3,143,143,-2,2,142] & $10^9$ & $\#\exc_E^{N}=8410$, $\max(\exc_E^N)=44363163$\\
\midrule
109 & [32,44,47,20,28,36] & $10^8$ & $\#\exc_E^{N}=205$, $\max(\exc_E^N)=193747$\\ 
\midrule
109 & [23,39,59,10,14,22] & $10^8$ & $\#\exc_E^{N}=215$, $\max(\exc_E^N)=1034083$\\ 
\midrule
109 & [8,55,111,4,8,2] & $10^9$ & $\#\exc_E^{N}=1039$, $\max(\exc_E^N)=2522587$\\ 
\midrule
109 & [24,39,56,16,12,4] &  $10^8$ & $\#\exc_E^{N}=225$, $\max(\exc_E^N)=215659$\\ 
\midrule
109 & [11,40,119,4,2,40] &  $10^9$ & $\#\exc_E^{N}=891$, $\max(\exc_E^N)=947755$\\
\midrule
109 & [19,23,119,-2,18,22] &  $10^9$ & $\#\exc_E^{N}=857$, $\max(\exc_E^N)=1300915$\\
\midrule
113 & [35,39,47,-6,34,10] & $10^8$ & $\#\exc_E^{N}=213$, $\max(\exc_E^N)=142267$\\
\midrule
113 & [23,40,59,8,2,20] & $10^8$ & $\#\exc_E^{N}=220$, $\max(\exc_E^N)=146787$\\
\midrule
113 & [20,47,68,-12,4,44] & $10^8$ & $\#\exc_E^{N}=247$, $\max(\exc_E^N)=253363$\\
\midrule
113 & [23,24,119,20,10,24] & $5\times 10^9$ & $\#\exc_E^{N}=904$, $\max(\exc_E^N)=1800643$\\
\midrule
113 & [19,24,119,4,2,24] & $5\times 10^9$ & $\#\exc_E^{N}=1005$, $\max(\exc_E^N)=1997835$\\
\midrule
113 & [12,39,116,8,12,4] & $5\times 10^9$ & $\#\exc_E^{N}=907$, $\max(\exc_E^N)=1130803$\\
\midrule
113 & [3,151,151,-2,2,150] & $5\times 10^9$ & $\#\exc_E^{N}=9302$, $\max(\exc_E^N)=30158683$\\
\bottomrule
\end{tabular}
\label{exctable6}
\end{table}
 
\begin{table}
\caption{Good Bounds $D_p$ from Theorem \ref{BoundTheorem} for $p\leq 107$.}
\begin{tabular}{p{2cm}cp{2.5cm}p{2cm}c}
    \toprule
$p$ & $D_p$& &$p$ & $D_p$\\
\cmidrule{1-2}\cmidrule{4-5}
$3,5,7,13$ & $1$ & & $59$ & $1.166\times 10^{19}$\\
$11$ & $8.973\times 10^{9}$ & & $67$ & $2.642\times 10^{19}$\\
$17$ & $1.510\times 10^{15}$ & &$71$ & $1.793\times 10^{21}$\\
$19$ & $1.606\times 10^{13}$ & &$73$ & $1.452\times 10^{19}$\\
$23$ & $6.955\times 10^{15}$ & &$79$ & $2.370\times 10^{20}$\\
$29$ & $1.550\times 10^{17}$ & &$83$ & $1.033\times 10^{20}$\\
$31$ & $1.008\times 10^{16}$ & &$89$ & $3.257\times 10^{25}$\\
$37$ & $1.117\times 10^{15}$ & &$97$ & $7.815\times 10^{19}$\\
$41$ & $2.379\times 10^{17}$ & &$101$ & $5.296\times 10^{20}$\\
$43$ & $6.461\times 10^{18}$ & &$103$ & $8.748\times 10^{19}$\\
$47$ & $5.467\times 10^{19}$ & &$107$ & $1.761\times 10^{21}$\\
$53$ & $1.166\times 10^{19}$ & &$109$ & $1.841\times 10^{21}$\\
$61$ & $3.797\times 10^{17}$ & &$113$ & $1.041\times 10^{23}$\\
\cmidrule{1-2}\cmidrule{4-5}
\end{tabular}
\label{GoodBoundTable}
\end{table}

\bibliographystyle{amsplain}
\bibliography{biblio}

\end{document}